\newtheorem{thm}{Theorem}[section]
\newtheorem{thmm}{Theorem}
\newtheorem{cor}[thm]{Corollary}
\newtheorem{lem}[thm]{Lemma}
\newtheorem{prop}[thm]{Proposition}
\theoremstyle{definition}
\newtheorem{dfn}[thm]{Definition}
\newtheorem{rem}[thm]{Remark}
\newtheorem{ques}[thm]{Question}
\newtheorem{ex}[thm]{Example}
\newtheorem{nota}[thm]{Notation}
\newtheorem*{claim*}{Claim}
\theoremstyle{remark}
\numberwithin{equation}{thm}
\def\ocm{\mathrm{\Omega CM}}
\def\ul{\mathrm{Ul}}
\def\mod{\operatorname{mod}}
\def\cm{\mathrm{CM}}
\def\m{\mathfrak{m}}
\def\syz{\Omega}
\def\proj{\operatorname{proj}}
\def\inj{\operatorname{inj}}
\def\Hom{\operatorname{Hom}}
\def\Ext{\operatorname{Ext}}
\def\add{\operatorname{add}}
\def\ann{\operatorname{ann}}
\def\ind{\operatorname{ind}}
\def\s{\mathcal{S}}
\def\depth{\operatorname{depth}}
\def\xx{\boldsymbol{x}}
\def\H{\operatorname{H}}
\def\lhom{\operatorname{\underline{Hom}}}
\def\Tor{\operatorname{Tor}}
\def\tr{\mathrm{Tr}}
\def\a{\operatorname{ann^\mathsf{h}}}
\def\C{\mathbb{C}}
\def\End{\operatorname{End}}
\def\id{\mathsf{id}}
\def\Z{\mathbb{Z}}
\def\Cl{\operatorname{Cl}}
\def\p{\mathfrak{p}}
\def\ocms{\mathrm{\Omega CM}^\times}
\def\ge{\geqslant}
\def\le{\leqslant}
\def\leq{\leqslant}
\def\rank{\operatorname{rank}}
\def\iff{\Leftrightarrow}
\def\implies{\Rightarrow}
\def\edim{\operatorname{edim}}
\def\gr{\mathrm{gr}}
\def\ev{\operatorname{ev}}
\begin{document}
\allowdisplaybreaks
\title[Ulrich modules and minimal multiplicity]{Ulrich modules over Cohen--Macaulay local rings\\
with minimal multiplicity}
\author{Toshinori Kobayashi}
\address{Graduate School of Mathematics, Nagoya University, Furocho, Chikusaku, Nagoya, Aichi 464-8602, Japan}
\email{m16021z@math.nagoya-u.ac.jp}
\author{Ryo Takahashi}
\address{Graduate School of Mathematics, Nagoya University, Furocho, Chikusaku, Nagoya, Aichi 464-8602, Japan}
\email{takahashi@math.nagoya-u.ac.jp}
\urladdr{http://www.math.nagoya-u.ac.jp/~takahashi/}
\thanks{2010 {\em Mathematics Subject Classification.} 13C14, 13D02, 13H10}
\thanks{{\em Key words and phrases.} Ulrich module, Cohen--Macaulay ring/module, minimal multiplicity, syzygy}
\thanks{The second author was partly supported by JSPS Grant-in-Aid for Scientific Research 16H03923 and 16K05098}
\begin{abstract}
Let $R$ be a Cohen--Macaulay local ring.
In this paper we study the structure of Ulrich $R$-modules mainly in the case where $R$ has minimal multiplicity.
We explore generation of Ulrich $R$-modules, and clarify when the Ulrich $R$-modules are precisely the syzygies of maximal Cohen--Macaulay $R$-modules.
We also investigate the structure of Ulrich $R$-modules as an exact category.
\end{abstract}
\maketitle
\section*{Introduction}

The notion of an {\em Ulrich module}, which is also called a {\em maximally generated (maximal) Cohen--Macaulay module}, has first been studied by Ulrich \cite{U}, and widely investigated in both commutative algebra and algebraic geometry; see \cite{BHU,CH,CM,ulrich,rdp,HUB,KM,NY} for example.
A well-known conjecture asserts that Ulrich modules exist over any Cohen--Macaulay local ring $R$.
Even though the majority seem to believe that this conjecture does not hold true in full generality, a lot of partial (positive) solutions have been obtained so far.
One of them states that the conjecture holds whenever $R$ has minimal multiplicity (\cite{BHU}).
Thus, in this paper, mainly assuming that $R$ has minimal multiplicity, we are interested in what we can say about the structure of Ulrich $R$-modules.

We begin with exploring the number and generation of Ulrich modules.
The following theorem is a special case of our main results in this direction ($\syz$ denotes the first syzygy).

\begin{thmm}\label{A}
Let $(R,\m,k)$ be a $d$-dimensional complete Cohen--Macaulay local ring.
\begin{enumerate}[\rm(1)]
\item
Assume that $R$ is normal with $d=2$ and $k=\C$ and has minimal multiplicity.
If $R$ does not have a rational singularity, then there exist infinitely many indecomposable Ulrich $R$-modules.
\item
Suppose that $R$ has an isolated singularity.
Let $M,N$ be maximal Cohen--Macaulay $R$-modules with $\Ext_R^i(M,N)=0$ for all $1\le i\le d-1$.
If either $M$ or $N$ is Ulrich, then so is $\Hom_R(M,N)$.
\item
Let $\xx=x_1,\dots,x_d$ be a system of parameters of $R$ such that $\m^2=\xx\m$.
If $M$ is an Ulrich $R$-module, then so is $\syz(M/x_iM)$ for all $1\le i\le d$.
If one chooses $M$ to be indecomposable and not to be a direct summand of $\syz^dk$, then one finds an indecomposable Ulrich $R$-module not isomorphic to $M$ among the direct summands of the modules $\syz(M/x_iM)$.
\end{enumerate}
\end{thmm}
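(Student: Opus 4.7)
The plan is to handle the first assertion (Ulrichness of $\syz(M/x_iM)$) and then use it for the second. Let $\mu:=\mu(M)$, fix a minimal free cover $\pi\colon R^\mu\twoheadrightarrow M$, and set $K:=\syz(M/x_iM)$. Since $x_i\in\m$, the composition $R^\mu\to M\to M/x_iM$ is again minimal, so $K=\{v\in R^\mu : \pi(v)\in x_iM\}$. A direct check yields $K=\syz M+x_iR^\mu$ with $\syz M\cap x_iR^\mu=x_i\syz M$ (using that $x_i$ is regular on $M$), and the map $v\mapsto\pi(v)/x_i$ fits $K$ into
\[
0\to\syz M\to K\to M\to 0. \qquad (\ast)
\]

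For the first assertion, I would first show $\syz M$ is itself Ulrich: tensoring $0\to\syz M\to R^\mu\to M\to 0$ with $R/\xx$ gives an injection $\syz M/\xx\syz M\hookrightarrow (R/\xx)^\mu$ (the relevant Tor vanishes since $M$ is maximal Cohen--Macaulay); the image lies in $(\m/\xx)^\mu$ because $\syz M\subseteq\m R^\mu$, and $(\m/\xx)^2=0$ in $R/\xx$ as $\m^2=\xx\m$, forcing $\m(\syz M/\xx\syz M)=0$. Now $K$ is maximal Cohen--Macaulay; tensoring $(\ast)$ with $R/\xx$ yields $e(K)=\mu(\syz M)+\mu(M)$, and the auxiliary sequence $0\to R^\mu\xrightarrow{x_i}K\to\syz M/x_i\syz M\to 0$ (tensored with $k$, where injectivity of $k^\mu\to K/\m K$ is checked directly) gives $\mu(K)=\mu(M)+\mu(\syz M)$. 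Matching these equalities proves $K$ is Ulrich.

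For the second assertion I would argue the contrapositive: assume every indecomposable summand of every $\syz(M/x_iM)$ is isomorphic to $M$, so $\syz(M/x_iM)\cong M^{a_i}$, and aim to conclude $M\mid\syz^d k$. Because $R$ is complete and $M$ is indecomposable, $\End_R(M)$ is local, so any surjection $M^{a_i}\twoheadrightarrow M$ splits; applied to $(\ast)$, this forces $\syz M\cong M^{a_i-1}$, whence $a_i=:a$ is independent of $i$. The splitting means the class of $(\ast)$ in $\Ext^1_R(M,\syz M)\cong\underline{\End}_R(\syz M)$ vanishes, and a direct identification shows this class is multiplication by $x_i$; hence $x_i$ acts stably by zero on $M$ in the stable category of maximal Cohen--Macaulay modules.

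Set $N_r:=M/(x_1,\dots,x_r)M$, so $N_0=M$ and $N_d\cong k^\mu$ (the latter by the Ulrich property). The construction of $(\ast)$ applied to $(N_r,x_{r+1})$ in place of $(M,x_i)$ produces $0\to\syz N_r\to\syz N_{r+1}\to N_r\to 0$ with class multiplication by $x_{r+1}$. Applying $\syz^r$ and using the inductive identification $\syz^r N_r\simeq M^{a^r}$ (so $\syz^{r+1}N_r\simeq M^{(a-1)a^r}$), the connecting map of the resulting triangle is multiplication by $x_{r+1}$ on $M^{(a-1)a^r}$, which is stably zero by the previous paragraph; hence the triangle splits and $\syz^{r+1}N_{r+1}\simeq M^{a^{r+1}}$. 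Taking $r=d$ yields $(\syz^d k)^\mu\simeq M^{a^d}$ stably, and Krull--Schmidt (using that nonzero Ulrich modules over a non-regular ring have no free summand, the regular case being trivial) forces $M\mid\syz^d k$. The delicate part will be this inductive step: correctly identifying the connecting maps as multiplication by $x_{r+1}$ in the stable category, and controlling free summands when passing between stable and honest isomorphisms.
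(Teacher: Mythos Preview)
Your approach to part~(3) is sound in outline but diverges from the paper in two places, and the second divergence leaves a gap that you yourself flag.

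For the Ulrichness of $\syz(M/x_iM)$, the paper does not compute $\mu$ and $e$ separately. Instead it observes that $(\ast)$ is precisely $x_i\sigma$, where $\sigma$ is the canonical extension $0\to\syz M\to R^{\mu}\to M\to 0$, and proves a general fact (Proposition~\ref{6}): for $L,N\in\ul(R)$ and $a\in Q$, the middle term of any extension $a\tau:0\to L\to X\to N\to 0$ is Ulrich, because by Striuli's theorem $a\tau\otimes_R R/aR$ splits. Your direct computation also works and is more elementary; the paper's route yields a more general statement along the way.

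For the second assertion, both arguments reach $Q\subseteq\a M$, but differently. The paper's route is shorter and uses a weaker hypothesis: if merely $M\mid\syz(M/x_iM)$, then $\Ext^1_R(M,\syz M)$ is a summand of
\[
\Ext^1_R(\syz(M/x_iM),\syz M)\cong\Ext^2_R(M/x_iM,\syz M),
\]
which is killed by $x_i$. Your route (splitting of $(\ast)$ via $\syz(M/x_iM)\cong M^{a_i}$ and locality of $\End_R(M)$) also arrives at $x_i\sigma=0$, hence $x_i\cdot 1=0$ in $\End_R(\syz M)/I\cong\Ext^1_R(M,\syz M)$; since $I$ is a left ideal and $x_i$ is central, this does give $x_i\in\ann\Ext^1_R(M,\syz M)=\a M$. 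So this part is fine, just longer.

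The real divergence is what comes next. Once $Q\subseteq\a M$, the paper simply invokes \cite[Corollary~3.2(1)]{kos} to conclude that $M$ is a summand of $\syz^d(M/QM)\in\add\syz^dk$. Your inductive argument is essentially an attempt to reprove that result by hand. The problematic step is the claim that, after applying $\syz^r$ to $(\ast)_r$, the resulting extension class is ``multiplication by $x_{r+1}$''. What you actually need, and what suffices, is that this class lies in $x_{r+1}\cdot\Ext^1_R(\syz^rN_r,\syz^{r+1}N_r)$: since $\syz^rN_r\simeq M^{a^r}$ and $x_{r+1}\in\a M$ annihilates every positive $\Ext$ out of $M$, the whole group is killed by $x_{r+1}$ and the class vanishes. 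To get this, use that the iterated horseshoe/connecting maps give an $R$-linear map $\Ext^1_R(N_r,\syz N_r)\to\Ext^1_R(\syz^rN_r,\syz^{r+1}N_r)$ carrying $x_{r+1}\sigma_r$ to the class in question. Your ``triangle'' language is misleading, since $\underline{\cm}(R)$ is not triangulated when $R$ is not Gorenstein; stay with extensions and $R$-linearity of connecting homomorphisms. With that repair your argument goes through, but it is substantially longer than the paper's one-line citation.
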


Next, we relate the Ulrich modules with the syzygies of maximal Cohen--Macaulay modules.
To state our result, we fix some notation.
Let $R$ be a Cohen--Macaulay local ring with canonical module $\omega$.
We denote by $\mod R$ the category of finitely generated $R$-modules, and by $\ul(R)$ and $\ocms(R)$ the full subcategories of Ulrich modules and first syzygies of maximal Cohen--Macaulay modules without free summands, respectively.
Denote by $(-)^\dag$ the canonical dual $\Hom_R(-,\omega)$.
Then $\ul(R)$ is closed under $(-)^\dag$, and contains $\ocms(R)$ if $R$ has minimal multiplicity.
The module $\syz^dk$ belongs to $\ocms(R)$, and hence $\syz^dk,(\syz^dk)^\dag$ belong to $\ul(R)$.
Thus it is natural to ask when the conditions in the theorem below hold, and we actually answer this.

\begin{thmm}\label{B}
Let $R$ be a $d$-dimensional singular Cohen--Macaulay local ring with residue field $k$ and canonical module $\omega$, and assume that $R$ has minimal multiplicity.
Consider the following conditions.
\begin{enumerate}[\quad\rm(1)]
\item
The equality $\ul(R)=\ocms(R)$ holds.
\item
The category $\ocms(R)$ is closed under $(-)^\dag$.
\item
The module $(\syz^dk)^\dag$ belongs to $\ocms(R)$.
\item
One has $\Tor_1(\tr(\syz^dk)^\dag,\omega)=0$.
\item
One has $\Ext_R^{d+1}(\tr(\syz^dk)^\dag,R)=0$ and $R$ is locally Gorenstein on the punctured spectrum.
\item
There is an epimorphism $\omega^{\oplus n}\to\syz^dk$ for some $n>0$.
\item
There is an isomorphism $\syz^dk\cong(\syz^dk)^\dag$.
\item
The local ring $R$ is almost Gorenstein.
\end{enumerate}
Then (1)--(6) are equivalent and (7) implies (1).
If $d>0$ and $k$ is infinite, then (1) implies (8).
If $d=1$ and $k$ is infinite, then (1)--(8) are equivalent.
If $R$ is complete normal with $d=2$ and $k=\C$, then (1)--(7) are equivalent unless $R$ has a cyclic quotient singularity.
\end{thmm}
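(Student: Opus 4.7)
The plan organises the statement around the circle (1)$\Rightarrow$(2)$\Rightarrow$(3)$\Rightarrow$(1), with (3)$\Leftrightarrow$(4)$\Leftrightarrow$(5)$\Leftrightarrow$(6) attached as lateral equivalences, and (7), (8) handled afterwards. The unifying observation is that the module $L:=(\syz^d k)^\dag$ is automatically Ulrich, since $\syz^d k\in\ocms(R)\subseteq\ul(R)$ and $\ul(R)$ is closed under $(-)^\dag$, so each of (3)--(6) is really the single assertion ``$L\in\ocms(R)$'' expressed in a different language. The content of the theorem is that an obstruction concentrated at this one module controls the entire Ulrich category.

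The formal steps are brief. (1)$\Rightarrow$(2) transports the $(-)^\dag$-closure of $\ul(R)$ to $\ocms(R)$; (2)$\Rightarrow$(3) specialises to $\syz^d k$; and (7)$\Rightarrow$(3) is immediate because (7) gives $L\cong\syz^d k\in\ocms(R)$. The equivalence (3)$\Leftrightarrow$(6) is canonical duality: $L\hookrightarrow R^n$ iff $\omega^n\twoheadrightarrow L^\dag=\syz^d k$, by applying $(-)^\dag$ to either side and using $L^{\dag\dag}\cong L$. For (3)$\Leftrightarrow$(4) I would use that $L\in\ocms(R)$ iff $L$ is torsionless iff $\Ext^1_R(\tr L,R)=0$ (Auslander--Bridger), and then a parallel ``$\omega$-coefficient'' transpose sequence converts this vanishing into $\Tor^R_1(\tr L,\omega)=0$. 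For (4)$\Leftrightarrow$(5), local duality applies: when $R$ is locally Gorenstein on the punctured spectrum, $\omega$ is locally free there, so $\Tor^R_1(\tr L,\omega)$ is of finite length and is Matlis-dual to $\Ext^{d+1}_R(\tr L,R)$; the Gorenstein-on-punctured-spectrum half of (5) should follow automatically from (4), because $\tr L$ has finite projective dimension off $\m$.

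The substantive step is (3)$\Rightarrow$(1). Given an Ulrich $R$-module $M$, the goal is $M\in\ocms(R)$, equivalently the existence of an epimorphism $\omega^n\twoheadrightarrow M^\dag$. The strategy is to exploit the defining identity $M/\xx M\cong k^s$ for a minimal reduction $\xx$ with $\m^2=\xx\m$: comparing the minimal free resolutions of $M$ and of $M/\xx M\cong k^s$ via the Koszul complex of $\xx$ (which is $M$-regular) yields structural information relating $M^\dag$ to the canonical duals of the syzygies of $k$, and pre-composing with the epi $\omega^n\twoheadrightarrow\syz^d k$ from (6) produces the required map. This is the main obstacle: formal manipulation around $L$ alone does not see all Ulrich modules, and bridging from ``$L\in\ocms(R)$'' to ``every Ulrich lies in $\ocms(R)$'' needs this Ulrich-to-syzygy-of-$k$ comparison.

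The dimension-dependent claims are handled case by case. For (1)$\Rightarrow$(8) when $d>0$ and $k$ is infinite, I would use (6) together with the Koszul-type embedding $\syz^d k\hookrightarrow R^d$ coming from a minimal reduction to build an exact sequence $0\to R\to\omega\to C\to 0$ whose cokernel $C$ satisfies the Ulrich-type axiom in the Goto--Takahashi--Taniguchi definition of almost Gorensteinness. In dimension one, the standard characterisation of almost Gorenstein rings as those admitting a surjection $\omega\twoheadrightarrow\m=\syz k$ matches (6), closing the loop (8)$\Rightarrow$(6). For complete normal $d=2$ with $k=\C$ outside the cyclic-quotient class, Theorem~A(1) and the representation theory of rational non-cyclic surface singularities together force the self-duality $\syz^2 k\cong(\syz^2 k)^\dag$ of (7) once (1) is assumed; cyclic quotients are excluded because the asymmetry of their MCM category permits (1) without (7). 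The hardest pieces to execute are the Koszul-comparison argument for (3)$\Rightarrow$(1) and the case analysis that identifies exactly the cyclic quotient singularities as the obstruction to (7).
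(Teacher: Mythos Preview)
Your overall architecture is sound, but several of the load-bearing steps are either wrong as stated or missing the key idea that the paper supplies.

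\textbf{On (3)$\Leftrightarrow$(4).} The claim ``$L\in\ocms(R)$ iff $L$ is torsionless iff $\Ext^1_R(\tr L,R)=0$'' is false: torsionless only means $L$ embeds in a free module, not that the cokernel is Cohen--Macaulay. The paper's Proposition~\ref{44} shows the correct equivalence $M\in\ocm(R)\Leftrightarrow\lhom_R(M,\omega)=0\Leftrightarrow\Tor_1^R(\tr M,\omega)=0$, the last step being Yoshino's lemma; no detour through torsionlessness is needed.

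\textbf{On (4)$\Leftrightarrow$(5).} Your proposed Matlis duality between $\Tor_1^R(\tr L,\omega)$ and $\Ext_R^{d+1}(\tr L,R)$ is not a standard statement and does not obviously hold. More seriously, your reason for ``locally Gorenstein on the punctured spectrum follows from (4)'' is circular: $\tr L$ has finite projective dimension at a nonmaximal prime $\p$ only if $L_\p$ is free, but $L_\p\cong\omega_\p^{\oplus m}$, so this already presupposes $R_\p$ Gorenstein. The paper instead passes through the equivalent condition (6): the epimorphism $\omega^{\oplus n}\to\syz^dk$ localizes to exhibit a free summand of $\omega_\p^{\oplus n}$, forcing $R_\p$ Gorenstein. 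The equivalence with the Ext-vanishing then comes from the $(d{+}1)$-torsionfree characterisation.

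\textbf{On (3)$\Rightarrow$(1).} This is the heart of the theorem and your sketch does not isolate the essential lemma: \emph{every Ulrich module $M$ admits a surjection $(\syz^dk)^{\oplus n}\twoheadrightarrow M$} (Lemma~\ref{8}(4)). The paper proves this by induction on $d$, reducing modulo one parameter $x_1$ and using the nontrivial identification $\syz_R(\syz^{d-1}_{R/(x_1)}k)\cong\syz^d_Rk$. Once this lemma is in hand, the argument is short: dualise a surjection $(\syz^dk)^{\oplus n}\to M^\dag$ to embed $M$ into $((\syz^dk)^\dag)^{\oplus n}$ with Cohen--Macaulay cokernel, then use (3) and the fact that $\ocms(R)$ is closed under kernels of such embeddings. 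Your ``Koszul comparison'' gestures in the right direction but does not produce this surjection.

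\textbf{On (1)$\Rightarrow$(8).} Your proposed direct construction of $0\to R\to\omega\to C\to 0$ from (6) is not carried out, and the ``Koszul-type embedding $\syz^dk\hookrightarrow R^d$'' you invoke does not exist in general. The paper proceeds by induction on $d$: in dimension one it quotes the characterisation of almost Gorensteinness via $\m^\dag\cong\m$, and for $d>1$ it reduces modulo a parameter and applies the descent theorem for almost Gorensteinness from \cite{almgor}.

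\textbf{On the $d=2$, $k=\C$ case.} This is where your proposal has the largest gap. Theorem~A(1) concerns the \emph{number} of indecomposable Ulrich modules and is irrelevant here. The paper's argument hinges on the \emph{fundamental module} $E$, sitting in $0\to\omega\to E\to\m\to0$: one shows $E$ is Ulrich (under minimal multiplicity), self-dual, and \emph{indecomposable precisely when $R$ is not a cyclic quotient singularity}. Assuming (3), one embeds $(\syz^2k)^\dag$ in $E^{\oplus n}$ with Ulrich cokernel, proves this embedding splits (using that $(\syz^dk)^\dag$ is injective with respect to short exact sequences of Ulrich modules), and concludes $(\syz^2k)^\dag\cong E^{\oplus m}\cong(E^\dag)^{\oplus m}\cong\syz^2k$. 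The exclusion of cyclic quotients enters exactly through the indecomposability of $E$, not through any asymmetry of the MCM category.
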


Finally, we study the structure of the category $\ul(R)$ of Ulrich $R$-modules as an exact category in the sense of Quillen \cite{Q}.
We prove that if $R$ has minimal multiplicity, then $\ul(R)$ admits an exact structure with enough projective/injective objects.

\begin{thmm}\label{C}
Let $R$ be a $d$-dimensional Cohen--Macaulay local ring with residue field $k$ and canonical module, and assume that $R$ has minimal multiplicity.
Let $\s$ be the class of short exact sequences $0 \to L \to M \to N \to 0$ of $R$-modules with $L,M,N$ Ulrich.
Then $(\ul(R),\s)$ is an exact category having enough projective objects and enough injective objects with $\proj\ul(R)=\add\syz^dk$ and $\inj\ul(R)=\add(\syz^dk)^\dag$.
\end{thmm}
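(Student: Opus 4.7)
The plan is to verify Quillen's axioms for $(\ul(R),\s)$ directly and then identify the projective and injective objects. Fix a minimal reduction $Q=(x_1,\dots,x_d)$ of $\m$ so that $\m^2=Q\m$; an MCM module $M$ is Ulrich precisely when $\m M=QM$, equivalently when $M/QM$ is a $k$-vector space. The main tool is that for any short exact sequence of MCM modules, $\mathbf{x}$ is regular on each term, so $\Tor_1^R(-,R/Q)=0$; hence reduction modulo $Q$ is exact, and an admissible sequence in $\s$ descends to a short exact sequence of $k$-vector spaces, which splits. Composition of admissible monos $L\hookrightarrow M\hookrightarrow M'$ yields an admissible mono because $M'/L$ is MCM (depth lemma) and $\m(M'/L)=(\m M'+L)/L=(QM'+L)/L=Q(M'/L)$ from $\m M'=QM'$; dually for epis via $L\cap QM'=QL$ (from the mod-$Q$ exactness). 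For the pushout of an admissible mono $L\hookrightarrow M$ along $f:L\to L'\in\ul(R)$, $M'=M\cup_L L'$ satisfies $\m M'=QM'$ because $\m(M\oplus L')=Q(M\oplus L')$; for pullbacks, the reduction modulo $Q$ of the pullback sequence is the pullback of the split reduced original, hence splits, so its middle term is a $k$-vector space modulo $Q$ and therefore Ulrich.

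Since $R$ has minimal multiplicity, $\ocms(R)\subseteq\ul(R)$, so $\syz^dk\in\ul(R)$. To show $\syz^dk$ is projective in $(\ul(R),\s)$, let $0\to L\to M\to\syz^dk\to 0$ be admissible and $F\twoheadrightarrow\syz^dk$ a minimal free cover with kernel $\syz^{d+1}k$. A lift $F\to M$ of this cover restricts to an obstruction map $\phi:\syz^{d+1}k\to L$; since $\syz^{d+1}k\subseteq\m F$ and $M$ is Ulrich, $\phi(\syz^{d+1}k)\subseteq L\cap\m M=L\cap QM=QL$. The identity $\m F=QF+\syz^{d+1}k$, derived from $\m\syz^dk=Q\syz^dk$, combined with a Nakayama-type lift of the split section of the mod-$Q$ sequence, then extends $\phi$ to $F\to L$, giving a splitting. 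For enough projectives, given Ulrich $M$ with $\mu(M)=n$, one constructs an admissible epi $(\syz^dk)^n\twoheadrightarrow M$ with Ulrich kernel by iterating Theorem~A(3): the snake lemma applied to the minimal covers of $M$ and $M/x_iM$ yields the admissible SES $0\to\syz M\to\syz(M/x_iM)\to M\to 0$ in $\s$ (using $x_iM\cong M$), and a $d$-fold iteration combined with horseshoe bookkeeping identifies the top of the resulting resolution with a power of $\syz^dk$.

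Finally, $(-)^\dag=\Hom_R(-,\omega)$ is an exact duality on MCM modules that preserves the Ulrich property and reverses admissible monos and epis; applying it interchanges projectives and injectives, yielding $\inj\ul(R)=\add(\syz^dk)^\dag$ with enough injectives. The main obstacle will be the projectivity of $\syz^dk$: extending $\phi$ from $\syz^{d+1}k$ to $F$ requires using the Ulrich identity $\m F=QF+\syz^{d+1}k$ and Nakayama's lemma to promote a section of the reduced SES to an $R$-linear lift. The iterative construction of enough projectives via Theorem~A(3) is also delicate, requiring careful horseshoe arguments to ensure the resolution terminates in $\add\syz^dk$.
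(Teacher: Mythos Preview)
Your verification of the exact-category axioms is essentially the same as the paper's (which packages the key fact as Lemma~3.1(2)(a): in a short exact sequence of MCM modules with Ulrich middle term, the ends are Ulrich), and your use of the duality $(-)^\dag$ to pass between projectives and injectives is also how the paper finishes. The substantive difference---and the genuine gap---is in how you establish that $\syz^dk$ is projective.

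Your argument produces $\phi:\syz^{d+1}k\to L$ with image in $QL$ and then asserts that this, together with $\m F=QF+\syz^{d+1}k$ and a ``Nakayama-type lift'', allows you to extend $\phi$ to $F$. But the inclusion $\phi(\syz^{d+1}k)\subseteq QL$ does \emph{not} imply $\phi\in Q\cdot\Hom_R(\syz^{d+1}k,L)$, and without that there is no mechanism for producing the extension; the obstruction class in $\Ext^1_R(\syz^dk,L)$ is simply not shown to vanish. The paper sidesteps this entirely by proving injectivity of $(\syz^dk)^\dag$ \emph{first}, via the observation that $(\syz^dk)^\dag$ is the minimal Cohen--Macaulay approximation of $k$: there is an exact sequence $0\to I\to(\syz^dk)^\dag\to k\to0$ with $I$ of finite injective dimension, so $\Ext^1_R(-,I)$ vanishes on MCM modules; combined with exactness of $\Hom_R(-,k)$ on admissible sequences (the dual of your mod-$Q$ splitting), the snake lemma gives exactness of $\Hom_R(-,(\syz^dk)^\dag)$. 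Projectivity of $\syz^dk$ then drops out by duality. This Cohen--Macaulay-approximation input is the idea you are missing.

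Your ``enough projectives'' sketch is also incomplete: iterating $M\mapsto\syz(M/x_iM)$ gives admissible epimorphisms, but nothing in your outline forces the source after $d$ steps to lie in $\add\syz^dk$. The paper instead argues by induction on $d$ (Lemma~3.4(4)): pass to $\overline R=R/(x_1)$, use the inductive surjection $(\syz^{d-1}_{\overline R}k)^{\oplus n}\twoheadrightarrow M/x_1M$, apply $\syz_R$ to the resulting admissible sequence (using the additivity $\mu(Y)=\mu(X)+\mu(Z)$ on admissible triples to keep the syzygy sequence exact), and then invoke the nontrivial identification $\syz_R(\syz^{d-1}_{\overline R}k)\cong\syz^d_Rk$. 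A final minor point: you fix a reduction $Q$ with $\m^2=Q\m$ at the outset, which needs $k$ infinite; the paper absorbs this into the lemmas via the faithfully flat base change $R\to R[t]_{\m R[t]}$.
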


The organization of this paper is as follows.
In Section \ref{dtc}, we deal with a question of Cuong on the number of indecomposable Ulrich modules.
We prove the first assertion of Theorem \ref{A} to answer this question in the negative.
In Section \ref{gen}, we consider how to generate Ulrich modules from given ones, and prove the second and third assertions of Theorem \ref{A}.
In Section \ref{ocmsul}, we compare Ulrich modules with syzygies of maximal Cohen--Macaulay modules, and prove Theorem \ref{B}; in fact, we obtain more equivalent and related conditions.
The final Section \ref{app} is devoted to giving applications of the results obtained in Section \ref{ocmsul}.
In this section we study the cases of dimension one and two, and exact structures of Ulrich modules, and prove the rest assertions of Theorem \ref{B} and Theorem \ref{C}.

\section*{Convention}
Throughout, let $(R,\m,k)$ be a Cohen--Macaulay local ring of Krull dimension $d$.
We assume that all modules are finitely generated and all subcategories are full.
A maximal Cohen--Macaulay module is simply called a Cohen--Macaulay module.
For an $R$-module $M$ we denote by $\syz M$ the first syzygy of $M$, that is, the kernel of the first differential map in the minimal free resolution of $M$.
Whenever $R$ admits a canonical module $\omega$, we denote by $(-)^\dag$ the canonical dual functor $\Hom_R(-,\omega)$.
For an $R$-module $M$ we denote by $e(M)$ and $\mu(M)$ the multiplicity and the minimal number of generators of $M$, respectively.

\section{A question of Cuong}\label{dtc}

In this section, we consider a question raised by Cuong \cite{C} on the number of Ulrich modules over Cohen--Macaulay local rings with minimal multiplicity.
First of all, let us recall the definitions of an Ulrich module and minimal multiplicity.

\begin{dfn}
\begin{enumerate}[(1)]
\item
An $R$-module $M$ is called {\em Ulrich} if $M$ is Cohen--Macaulay with $e(M)=\mu(M)$.
\item
The ring $R$ is said to have {\em minimal multiplicity} if $e(R)=\edim R-\dim R+1$.
\end{enumerate}
\end{dfn}

An Ulrich module is also called a {\em maximally generated (maximal) Cohen--Macaulay} module.
There is always an inequality $e(R)\ge\edim R-\dim R+1$, from which the name of minimal multiplicity comes.
If $k$ is infinite, then $R$ has minimal multiplicity if and only if $\m^2=Q\m$ for some parameter ideal $Q$ of $R$.
See \cite[Exercise 4.6.14]{BH} for details of minimal multiplicity.

The following question has been raised by Cuong \cite{C}.

\begin{ques}[Cuong]\label{1}
If $R$ is non-Gorenstein and has minimal multiplicity, then are there only finitely many indecomposable Ulrich $R$-modules?
\end{ques}

To explore this question, we start by introducing notation, which is used throughout the paper.

\begin{nota}
We denote by $\mod R$ the category of finitely generated $R$-modules.
We use the following subcategories of $\mod R$:
\begin{align*}
\cm(R)&=\{M\in\mod R\mid\text{$M$ is Cohen--Macaulay}\},\\
\ul(R)&=\{M\in\cm(R)\mid\text{$M$ is Ulrich}\},\\
\ocm(R)&=\left\{M\in\cm(R)\,\bigg|\,
\begin{matrix}
\text{$M$ is the kernel of an epimorphism from a}\\
\text{free module to a Cohen--Macaulay module}
\end{matrix}
\right\},\\
\ocms(R)&=\{M\in\ocm(R)\mid\text{$M$ does not have a (nonzero) free summand}\}.
\end{align*}
\end{nota}

\begin{rem}
\begin{enumerate}[(1)]
\item
The subcategories $\cm(R),\ul(R),\ocm(R),\ocms(R)$ of $\mod R$ are closed under finite direct sums and direct summands.
\item
One has
$
\ocm(R)\cup\ul(R) \subseteq \cm(R) \subseteq \mod R.
$
\end{enumerate}
\end{rem}

Here we make a remark to reduce to the case where the residue field is infinite.

\begin{rem}\label{infin}
Consider the faithfully flat extension $S:=R[t]_{\m R[t]}$ of $R$.
Then we observe that:
\begin{enumerate}[(1)]
\item
If $X$ is a module in $\ocms(R)$, then $X\otimes_RS$ is in $\ocms(S)$.
\item
A module $Y$ is in $\ul(R)$ if and only if $Y\otimes_RS$ is in $\ul(S)$ (see \cite[Lemma 6.4.2]{HS}).
\end{enumerate}
The converse of (1) also holds true; we prove this in Corollary \ref{45}.

\end{rem}

If $R$ has minimal multiplicity, then all syzygies of Cohen--Macaulay modules are Ulrich:

\begin{prop}\label{2}
Suppose that $R$ has minimal multiplicity.
Then $\ocms(R)$ is contained in $\ul(R)$.
\end{prop}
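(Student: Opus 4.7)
The plan is to reduce to the case where the residue field is infinite and then exhibit a parameter ideal $Q\subseteq\m$ with $\m^2=Q\m$. By Remark \ref{infin}, the flat extension $R\to S=R[t]_{\m R[t]}$ sends $\ocms(R)$ into $\ocms(S)$ and detects Ulrichness, while clearly preserving minimal multiplicity. So I may assume $k$ is infinite and fix a parameter ideal $Q=(x_1,\dots,x_d)$ with $\m^2=Q\m$.

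Given $M\in\ocms(R)$, I first realize $M$ as the first syzygy of a Cohen--Macaulay module via its minimal free cover. Writing $M=\ker(F\twoheadrightarrow N)$ with $F$ free and $N\in\cm(R)$, I factor the surjection through the minimal cover $R^n\twoheadrightarrow N$ with $n=\mu(N)$; this splits off a free summand of $M$, which by hypothesis must vanish, so $M=\syz N$ and $M\subseteq\m R^n$. The short exact sequence $0\to M\to R^n\to N\to 0$ forces $M\in\cm(R)$, and the task becomes the equality $e(M)=\mu(M)$. Since $\m^2M=Q\m M$, the ideal $Q$ is a reduction of $\m$ with respect to $M$, so $e(M)=\ell(M/QM)$, whereas $\mu(M)=\ell(M/\m M)$. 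The inclusion $QM\subseteq\m M$ is automatic, so I am reduced to proving $\m M\subseteq QM$.

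For this last step, the inclusion $M\subseteq\m R^n$ produces $\m M\subseteq\m^2R^n=Q\m R^n\subseteq QR^n$, hence $\m M\subseteq M\cap QR^n$. To see $M\cap QR^n=QM$, I note that $x_1,\dots,x_d$ forms an $N$-regular sequence because $N$ is Cohen--Macaulay of dimension $d$, so $\Tor_1^R(R/Q,N)=0$; tensoring the short exact sequence with $R/Q$ then yields the desired equality. Combining the two inclusions gives $\m M=QM$, so $M$ is Ulrich.

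I do not anticipate a serious obstacle. Everything rests on two standard facts: the image of a minimal free cover sits inside $\m F$ (yielding the key inclusion $M\subseteq\m R^n$), and a system of parameters acts regularly on a Cohen--Macaulay module (yielding the Tor vanishing). The only bookkeeping is the reduction to an infinite residue field via Remark \ref{infin} and the translation of $e(M)=\mu(M)$ into $QM=\m M$, both of which are routine.
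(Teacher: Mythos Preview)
Your proof is correct and follows essentially the same approach as the paper: reduce to infinite residue field, choose $Q$ with $\m^2=Q\m$, realize $M$ as a minimal syzygy sitting inside $\m R^n$, and combine $\m M\subseteq\m^2R^n=Q\m R^n$ with the Tor-vanishing $M\cap QR^n=QM$ (the paper phrases this as exactness of $0\to M/QM\to(R/Q)^n\to N/QN\to0$ and factoring through $\m(R/Q)^n$). The only cosmetic difference is that you make the reduction to a \emph{minimal} presentation explicit, whereas the paper simply starts from $\syz N$.
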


\begin{proof}
By Remark \ref{infin} we may assume that $k$ is infinite.
Since $R$ has minimal multiplicity, we have $\m^2=Q\m$ for some parameter ideal $Q$ of $R$.
Let $M$ be a Cohen--Macaulay $R$-module.
There is a short exact sequence $0 \to \syz M \to R^{\oplus n} \to M \to 0$, where $n$ is the minimal number of generators of $M$.
Since $M$ is Cohen--Macaulay, taking the functor $R/Q\otimes_R-$ preserves the exactness; we get a short exact sequence
$$
0 \to \syz M/Q\syz M \xrightarrow{f} (R/Q)^{\oplus n} \to M/QM \to 0.
$$
The map $f$ factors through the inclusion map $X:=\m(R/Q)^{\oplus n}\to(R/Q)^{\oplus n}$, and hence there is an injection $\syz M/Q\syz M\to X$.
As $X$ is annihilated by $\m$, so is $\syz M/Q\syz M$.
Therefore $\m\syz M=Q\syz M$, which implies that $\syz M$ is Ulrich.
\end{proof}

As a direct consequence of \cite[Corollary 3.3]{ncr}, we obtain the following proposition.

\begin{prop}\label{3}
Let $R$ be a $2$-dimensional normal excellent henselian local ring with algebraically closed residue field of characteristic $0$.
Then there exist only finitely many indecomposable modules in $\ocm(R)$ if and only if $R$ has a rational singularity.
\end{prop}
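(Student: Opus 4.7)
The plan is to apply \cite[Corollary 3.3]{ncr} essentially as a black box, since the statement is advertised as a direct consequence of that result. First I would open the cited reference and confirm that its hypotheses---$2$-dimensional normal excellent henselian local with algebraically closed residue field of characteristic zero---match ours exactly, and that the category of modules studied there coincides with our $\ocm(R)$, namely the syzygies of maximal Cohen--Macaulay modules. Once this dictionary is verified, the proposition is just a restatement of the cited corollary.

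If one wanted a conceptual outline rather than a black-box citation, I would split the argument in two. For the implication ``rational singularity $\implies$ finite indecomposables in $\ocm(R)$,'' I would use that under the given hypotheses a rational surface singularity is a quotient singularity by Brieskorn, and quotient singularities of the form $\C[[x,y]]^G$ have finite Cohen--Macaulay representation type by the classical theorem of Herzog and Auslander, with indecomposable MCM modules corresponding to the irreducible representations of $G$. Since $\ocm(R)$ is a subcategory of $\cm(R)$ closed under direct summands, finiteness of indecomposables in $\cm(R)$ at once forces finiteness in $\ocm(R)$.

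The converse direction is the substantive content of \cite[Corollary 3.3]{ncr}. The strategy is to take an additive generator $M$ of $\ocm(R)$ and study the endomorphism ring $E=\End_R(M)$: under our hypotheses one can show that $E$ has finite global dimension and behaves as a noncommutative (crepant) resolution of $R$, and the existence of such a resolution of a $2$-dimensional normal henselian local ring in our setting forces the singularity to be rational. The main obstacle---and the reason it is preferable to cite rather than reprove this direction---is precisely this last step, which requires the full machinery of noncommutative resolutions developed in \cite{ncr} rather than being a short deduction from standard commutative algebra.
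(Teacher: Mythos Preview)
Your approach is correct and matches the paper's own proof exactly: the paper simply records the proposition as a direct consequence of \cite[Corollary~3.3]{ncr} with no further argument. Your additional conceptual outline goes beyond what the paper provides, but it is accurate and harmless as commentary; the paper itself does not attempt either direction independently.
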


Combining the above propositions yields the following result.

\begin{cor}\label{n}
Let $R$ be a $2$-dimensional normal excellent henselian local ring with algebraically closed residue field of characteristic $0$.
Suppose that $R$ has minimal multiplicity and does not have a rational singularity.
Then there exist infinitely many indecomposable Ulrich $R$-modules.
In particular, Quenstion \ref{1} has a negative answer.
\end{cor}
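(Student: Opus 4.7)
The plan is to chain Propositions~\ref{2} and~\ref{3} in the obvious way. Since $R$ satisfies every hypothesis of Proposition~\ref{3} and is assumed not to have a rational singularity, that proposition gives infinitely many pairwise non-isomorphic indecomposable objects in $\ocm(R)$. I would then transfer this infiniteness to $\ocms(R)$: because $R$ is henselian, the Krull--Schmidt theorem applies to $\mod R$, so every module in $\ocm(R)$ is uniquely a direct sum of indecomposables. An indecomposable summand is either a free module---in which case it must be isomorphic to $R$---or else it lies in $\ocms(R)$ by definition. Thus at most one isomorphism class of indecomposables of $\ocm(R)$ is excluded from $\ocms(R)$, so $\ocms(R)$ still contains infinitely many indecomposables.

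With this in hand, the minimal multiplicity assumption kicks in via Proposition~\ref{2}, which supplies the inclusion $\ocms(R) \subseteq \ul(R)$. Consequently $\ul(R)$ contains infinitely many pairwise non-isomorphic indecomposable Ulrich $R$-modules, which is the main assertion.

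For the closing ``In particular, Question~\ref{1} has a negative answer'', one additionally needs to know that among the rings covered by the corollary there is at least one which is \emph{non-Gorenstein}, since that is the setting of Cuong's question. This is the only step that requires anything outside the two previous propositions, and I expect it to be the main (still mild) obstacle: one has to exhibit a non-Gorenstein, non-rational, normal henselian excellent surface singularity with minimal multiplicity and algebraically closed residue field of characteristic zero. I would resolve this by citing a standard example, for instance a suitable non-Gorenstein elliptic (or higher genus) normal surface singularity with minimal multiplicity; once such a ring is in place, the first part of the corollary produces infinitely many indecomposable Ulrich modules over it, contradicting the ``finitely many'' predicted by Question~\ref{1}.
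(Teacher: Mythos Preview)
Your proposal is correct and follows essentially the same route as the paper's proof, which simply chains Propositions~\ref{3} and~\ref{2}. If anything, you are slightly more careful: the paper passes directly from ``infinitely many indecomposables in $\ocm(R)$'' to ``infinitely many in $\ul(R)$'' via Proposition~\ref{2}, while you correctly insert the observation that an indecomposable object of $\ocm(R)$ is either isomorphic to $R$ or lies in $\ocms(R)$ (this needs no more than the definition of indecomposability, so the appeal to Krull--Schmidt is harmless but not essential). For the ``In particular'' clause, the paper does exactly what you anticipate: it exhibits, in the Example immediately following the corollary, an explicit non-Gorenstein ring satisfying all the hypotheses.
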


\begin{proof}
Proposition \ref{3} implies that $\ocm(R)$ contains infinitely many indecomposable modules, and so does $\ul(R)$ by Proposition \ref{2}.
\end{proof}

Here is an example of a non-Gorenstein ring satisfying the assumption of Corollary \ref{n}, which concludes that the question of Cuong is negative.

\begin{ex}
Let $B=\C[x,y,z,t]$ be a polynomial ring with $\deg x=\deg t=3$, $\deg y=5$ and $\deg z=7$.
Consider the $2\times3$-matrix $M=\left(\begin{smallmatrix}
x&y&z\\
y&z&x^3-t^3
\end{smallmatrix}\right)$ over $B$, and let $I$ be the ideal of $B$ generated by $2\times2$-minors of $M$.
Set $A=B/I$.
Then $A$ is a nonnegatively graded $\C$-algebra as $I$ is homogeneous.
By virtue of the Hilbert--Burch theorem (\cite[Theorem 1.4.17]{BH}), $A$ is a $2$-dimensional Cohen--Macaulay ring, and $x,t$ is a homogeneous system of parameters of $A$.
Directly calculating the Jacobian ideal $J$ of $A$, we can verify that $A/J$ is Artinian.
The Jacobian criterion implies that $A$ is a normal domain.
The quotient ring $A/tA$ is isomorphic to the numerical semigroup ring $\C[H]$ with $H=\langle3,5,7\rangle$.
Since this ring is not Gorenstein (as $H$ is not symmetric), neither is $A$.
Let $a(A)$ and $F(H)$ stand for the $a$-invariant of $A$ and the Frobenius number of $H$, respectively.
Then
$$
a(A)+3=a(A)+\deg(t)=a(A/tA)=F(H)=4,
$$
where the third equality follows from \cite[Theorem 3.1]{SR}.
Therefore we get $a(A)=1\not<0$, and $A$ does not have a rational singularity by the Flenner--Watanabe criterion (see \cite[Page 98]{LW}).

Let $A'$ be the localization of $A$ at $A_+$, and let $R$ be the completion of the local ring $A'$.
Then $R$ is a $2$-dimensional complete (hence excellent and henselian) normal non-Gorenstein local domain with residue field $\C$.
The maximal ideal $\m$ of $R$ satisfies $\m^2=(x,t)\m$, and thus $R$ has minimal multiplicity.
Having a rational singularity is preserved by localization since $A$ has an isolated singularity, while it is also preserved by completion.
Therefore $R$ does not have a rational singularity.
\end{ex}

We have seen that Question \ref{1} is not true in general.
However, in view of Corollary \ref{n}, we wonder if having a rational singularity is essential.
Thus, we pose a modified question.

\begin{ques}\label{4}
Let $R$ be a $2$-dimensional normal local ring with a rational singularity.
Then does $R$ have only finitely many indecomposable Ulrich modules?
\end{ques}

Proposition \ref{3} leads us to an even stronger question:

\begin{ques}\label{5}
If $\ocm(R)$ contains only finitely many indecomposable modules, then does $\ul(R)$ so?
\end{ques}

\section{Generating Ulrich modules}\label{gen}

In this section, we study how to generate Ulrich modules from given ones.
First of all, we consider using the Hom functor to do it.

\begin{prop}\label{gotwy}
Let $M,N$ be Cohen--Macaulay $R$-modules.
Suppose that on the punctured spectrum of $R$ either $M$ is locally of finite projective dimension or $N$ is locally of finite injective dimension.
\begin{enumerate}[\rm(1)]
\item
$\Ext_R^i(M,N)=0$ for all $1\le i\le d-2$ if and only if $\Hom_R(M,N)$ is Cohen--Macaulay.
\item
Assume $\Ext_R^i(M,N)=0$ for all $1\le i\le d-1$.
If either $M$ or $N$ is Ulrich, then so is $\Hom_R(M,N)$.
\end{enumerate}
\end{prop}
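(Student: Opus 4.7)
The plan is to prove (1) by induction on $d$ via cutting with an $R$-regular element, and to derive (2) from (1) through a Koszul-type iteration. By Remark~\ref{infin} we may assume $k$ is infinite. Under either form of the hypothesis, the Auslander--Buchsbaum formula (respectively its dual) together with the maximal Cohen--Macaulayness of $M$ (respectively $N$) forces $\Ext_R^i(M,N)_\p=0$ for every $\p\ne\m$ and $i\ge 1$, so $\Ext_R^i(M,N)$ has finite length for every $i\ge 1$.

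For~(1), induct on $d$. The cases $d\le 2$ follow from $\Hom_R(M,N)\hookrightarrow N^{\mu(M)}$ together with a depth-lemma argument on the four-term sequence obtained by applying $\Hom_R(-,N)$ to a free cover of $M$. For $d\ge 3$, pick any $R$-regular $x\in\m$, automatically $M$- and $N$-regular. Applying $\Hom_R(M,-)$ to $0\to N\xrightarrow{x}N\to N/xN\to 0$ and invoking the change-of-rings identification $\Ext_R^i(M,N/xN)\cong\Ext_{R/xR}^i(M/xM,N/xN)$ (valid because $x$ is $M$-regular), the long exact sequence propagates the assumed Ext vanishings to $\Ext_{R/xR}^i(M/xM,N/xN)=0$ for $1\le i\le d-3$ and supplies the isomorphism $\Hom_R(M,N)/x\Hom_R(M,N)\cong\Hom_R(M,N/xN)$. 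By induction, $\Hom_R(M,N/xN)$ is maximal Cohen--Macaulay of depth $d-1$ over $R/xR$; hence $\Hom_R(M,N)$ has depth $d$. For the converse direction, there are two Grothendieck spectral sequences converging to the hypercohomology of $\Gamma_\m\circ\Hom_R(M,-)$ at $N$: the first, $E_2^{p,q}=\Ext_R^p(M,H_\m^q(N))$, collapses (as $N$ is maximal Cohen--Macaulay) and identifies the $n$-th hypercohomology with $\Tor_{n-d}^R(M,N^\dag)^\vee$ via local duality, which vanishes for $n<d$; the second, $E_2^{p,q}=H_\m^p(\Ext_R^q(M,N))$, uses the finite length of $\Ext_R^q(M,N)$ for $q\ge 1$ and the maximal Cohen--Macaulayness of $\Hom_R(M,N)$ to show that this same hypercohomology equals $\Ext_R^n(M,N)$ in every degree $1\le n\le d-2$. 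Equating the two yields the required vanishings.

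For~(2), part~(1) gives $\Hom_R(M,N)\in\cm(R)$. Choose $x_1,\ldots,x_d$ generating a minimal reduction $Q$ of $\m$, and set $N_j=N/(x_1,\ldots,x_j)N$. At each step $j$ the long exact sequence yields an embedding $\Hom_R(M,N_{j-1})/x_j\Hom_R(M,N_{j-1})\hookrightarrow\Hom_R(M,N_j)$; the hypothesis $\Ext_R^i(M,N)=0$ for $1\le i\le d-1$ upgrades the first $d-1$ of these embeddings to isomorphisms (via the same change-of-rings and long-exact-sequence analysis as in~(1)), leaving only $j=d$ as a proper inclusion. Composing yields an embedding
\[
\Hom_R(M,N)/Q\Hom_R(M,N)\hookrightarrow\Hom_{R/Q}(M/QM,N/QN).
\]
If $N$ is Ulrich, then $N/QN\cong k^{\mu(N)}$ as an $R/Q$-module, so the target equals $k^{\mu(M)\mu(N)}$; if $M$ is Ulrich instead, every $R/Q$-linear map out of $M/QM\cong k^{\mu(M)}$ lands in the socle of $N/QN$, again a $k$-vector space. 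Either way $\m\Hom_R(M,N)\subseteq Q\Hom_R(M,N)$, which combined with $\Hom_R(M,N)\in\cm(R)$ and $Q$ being a minimal reduction of $\m$ is equivalent to $\Hom_R(M,N)$ being Ulrich.

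The principal technical hurdle is the converse of~(1): extracting individual vanishings of $\Ext_R^i(M,N)$ from the derived local-cohomology computation requires careful control of the spectral sequence differentials, exploiting the finite-length property of the higher Ext modules.
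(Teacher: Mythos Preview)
Your argument is correct, and it differs substantially from the paper's route in both parts.

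For (1), the paper simply invokes the depth lemma along the long exact sequence
\[
0\to\Hom_R(M,N)\to\Hom_R(F_0,N)\to\cdots\to\Hom_R(F_{n-1},N)\to\Hom_R(\syz^nM,N)\to\Ext_R^n(M,N)\to0
\]
coming from a free resolution of $M$, using only that the terminal $\Ext$ has finite length. Your forward direction instead reduces to $R/xR$ by induction on $d$ (which is fine; note you silently use that the punctured-spectrum hypothesis descends to $R/xR$, which it does by change of rings since $x$ is regular on $R$ and on $M$, resp.\ $N$). Your converse is the most distinctive departure: comparing the two hypercohomology spectral sequences for $R\Gamma_\m R\Hom_R(M,N)\simeq R\Hom_R(M,R\Gamma_\m N)$ (valid because $M$ is finitely generated and local cohomology is computed by the \v{C}ech complex). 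This is heavier machinery than the paper's single depth-lemma step, but it works; the invocation of local duality and $N^\dag$ is in fact unnecessary, since the first spectral sequence already degenerates to $\Ext_R^{n-d}(M,H_\m^d(N))$, which vanishes for $n<d$ simply because the index is negative.

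For (2), the paper splits into two cases: when $N$ is Ulrich it tensors a short exact sequence $0\to\Hom_R(M,N)\to\Hom_R(F_0,N)\to L\to0$ (with $L$ Cohen--Macaulay by the depth lemma) by $R/Q$, and when $M$ is Ulrich it runs the Grothendieck spectral sequence $\Ext_R^p(R/Q,\Ext_R^q(M,N))\Rightarrow\Ext_R^{p+q}(M/QM,N)$ to identify $\Hom_R(M,N)\otimes_RR/Q$ with a submodule of $\Ext_R^d(M/QM,N)$. Your iterative long-exact-sequence argument producing the single embedding
\[
\Hom_R(M,N)/Q\Hom_R(M,N)\hookrightarrow\Hom_{R/Q}(M/QM,N/QN)
\]
handles both cases at once, and is arguably more transparent: the target is annihilated by $\m$ whenever either $M/QM$ or $N/QN$ is. The inductive bookkeeping $\Ext_R^i(M,N_j)=0$ for $1\le i\le d-1-j$ that makes the first $d-1$ embeddings into isomorphisms is exactly what the vanishing hypothesis provides.
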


\begin{proof}
(1) This follows from the proof of \cite[Proposition 2.5.1]{I}; in it the isolated singularity assumption is used only to have that the Ext modules have finite length.

(2) By (1), the module $\Hom_R(M,N)$ is Cohen--Macaulay.
We may assume that $k$ is infinite by Remark \ref{infin}(2), so that we can find a reduction $Q$ of $\m$ which is a parameter ideal of $R$.

First, let us consider the case where $N$ is Ulrich.
Take a minimal free resolution $F=(\cdots\to F_1\to F_0\to 0)$ of $M$.
Since $\Ext_R^i(M,N)=0$ for all $1\le i\le d-1$, the induced sequence
$$0 \to \Hom_R(M,N) \to \Hom_R(F_0,N) \xrightarrow{f} \cdots \to \Hom_R(F_{d-1},N)\to \Hom_R(\syz^dM,N)\to\Ext^d_R(M,N)\to0
$$
is exact.
Note that $\Ext_R^d(M,N)$ has finite length.
By the depth lemma, the image $L$ of the map $f$ is Cohen--Macaulay.
An exact sequence
$
0 \to \Hom_R(M,N) \to \Hom_R(F_0,N) \to L \to 0
$
is induced, and the application of the functor $-\otimes_RR/Q$ to this gives rise to an injection
$$
\Hom_R(M,N)\otimes_RR/Q \hookrightarrow \Hom_R(F_0,N)\otimes_RR/Q.
$$
Since $N$ is Ulrich, the module $\Hom_R(F_0,N)\otimes_RR/Q$ is annihilated by $\m$, and so is $\Hom_R(M,N)\otimes_RR/Q$.
Therefore $\Hom_R(M,N)$ is Ulrich.

Next, we consider the case where $M$ is Ulrich.
As $\xx$ is an $M$-sequence, there is a spectral sequence
$$
E_2^{pq}=\Ext_R^p(R/Q,\Ext_R^q(M,N))\ \Longrightarrow\ H^{p+q}=\Ext_R^{p+q}(M/QM,N).
$$
The fact that $\xx$ is an $R$-sequence implies $E_2^{pq}=0$ for $p>d$.
By assumption, $E_2^{pq}=0$ for $1\le q\le d-1$.
Hence an exact sequence
$
0 \to E_2^{d0} \to H^d \to E_2^{0d} \to 0
$
is induced.
Since $M/QM$ is annihilated by $\m$, so is $H^d=\Ext_R^d(M/QM,N)$, and so is $E_2^{d0}$.
Note that
$$
E_2^{d0}=\Ext_R^d(R/Q,\Hom_R(M,N))\cong\H^d(\xx,\Hom_R(M,N))\cong\Hom_R(M,N)\otimes_RR/Q,
$$
where $\H^*(\xx,-)$ stands for the Koszul cohomology.
If follows that $\m$ kills $\Hom_R(M,N)\otimes_RR/Q$, which implies that $\Hom_R(M,N)$ is Ulrich.
\end{proof}

As an immediate consequence of Proposition \ref{gotwy}(2), we obtain the following corollary, which is a special case of \cite[Theorem 5.1]{ulrich}.

\begin{cor}\label{5.6}
Suppose that $R$ admits a canonical module.
If $M\in\ul(R)$, then $M^\dag\in\ul(R)$.
\end{cor}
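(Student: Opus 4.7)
The plan is to deduce this directly from Proposition \ref{gotwy}(2), by taking $N=\omega$. Concretely, given $M\in\ul(R)$, I would apply the proposition to the pair $(M,\omega)$ and conclude that $M^\dag=\Hom_R(M,\omega)$ lies in $\ul(R)$.

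To justify the application, I need to check the three hypotheses. First, both $M$ and $\omega$ are (maximal) Cohen--Macaulay: $M$ is Ulrich by hypothesis, and $\omega$ is Cohen--Macaulay by definition. Second, the canonical module $\omega$ has finite injective dimension over $R$ globally (in fact $\inj\dim_R\omega=d$), so in particular $\omega$ is locally of finite injective dimension on the punctured spectrum of $R$; thus the ``either--or'' alternative in the hypothesis of Proposition \ref{gotwy} is satisfied through $N=\omega$. Third, the vanishing $\Ext_R^i(M,\omega)=0$ for all $i\ge 1$ (and hence for all $1\le i\le d-1$) is a well-known property of maximal Cohen--Macaulay modules against the canonical module, which I would simply quote (for instance, from \cite{BH}).

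With the three hypotheses verified and with $M$ Ulrich playing the role of the Ulrich module in (2) of Proposition \ref{gotwy}, the proposition yields that $\Hom_R(M,\omega)=M^\dag$ is Ulrich, as desired.

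There is no real obstacle here; the only point that might demand a line of justification is the $\Ext$-vanishing $\Ext_R^{\ge 1}(M,\omega)=0$ for a maximal Cohen--Macaulay $M$, but this is standard and the rest of the argument is a one-line invocation of the previously established proposition.
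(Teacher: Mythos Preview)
Your proposal is correct and matches the paper's approach exactly: the paper also derives this corollary as an immediate consequence of Proposition~\ref{gotwy}(2), and your verification of the hypotheses with $N=\omega$ is precisely what is implicit there.
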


Next, we consider taking extensions of given Ulrich modules to obtain a new one.

\begin{prop}\label{6}
Let $Q$ be a parameter ideal of $R$ which is a reduction of $\m$.
Let $M,N$ be Ulrich $R$-modules, and take any element $a\in Q$.
Let
$
\sigma:0\to M\to E\to N\to0
$
be an exact sequence, and consider the multiplication
$
a\sigma: 0\to M\to X\to N\to 0
$
as an element of the $R$-module $\Ext_R^1(N,M)$.
Then $X$ is an Ulrich $R$-module.
\end{prop}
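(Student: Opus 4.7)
The plan is to verify that $X$ is Cohen--Macaulay and that $\m X=QX$; since $Q$ is a parameter ideal of the $d$-dimensional Cohen--Macaulay ring $R$ and a reduction of $\m$, it is a minimal reduction, and a maximal Cohen--Macaulay module $Y$ is Ulrich if and only if $\m Y=QY$ (using that $Q$ is $Y$-regular to get $e(Y)=\ell(Y/QY)$, and that $\mu(Y)=\ell(Y/\m Y)\le\ell(Y/QY)$ with equality precisely when $\m Y=QY$). Cohen--Macaulayness of $X$ follows immediately from the depth lemma applied to $0\to M\to X\to N\to 0$, since $M$ and $N$ are maximal Cohen--Macaulay of dimension $d$. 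So the real content is to show that $X/QX$ is annihilated by $\m$.

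The main step is to tensor the sequence $a\sigma\colon 0\to M\to X\to N\to 0$ with $R/Q$. Since $N$ is maximal Cohen--Macaulay and $Q$ is generated by a system of parameters (hence by an $N$-regular sequence), the Koszul resolution of $R/Q$ gives $\Tor_1^R(N,R/Q)=0$, yielding a short exact sequence of $R/Q$-modules
\[
0\to M/QM\to X/QX\to N/QN\to 0.
\]

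The crucial point is that this reduced sequence splits. Its class in $\Ext_{R/Q}^1(N/QN,M/QM)$ is the image of $[a\sigma]=a\,[\sigma]\in\Ext_R^1(N,M)$ under the natural base-change map $\Ext_R^1(N,M)\to\Ext_{R/Q}^1(N/QN,M/QM)$, which is $R$-linear (being induced by pushout/pullback along multiplication by scalars on $M$ and $N$). Because the target is an $R/Q$-module and $a\in Q$, the image of $a\,[\sigma]$ must vanish. One can also see this concretely: from the pushout description $X=(M\oplus E)/\{(am,-\iota(m)):m\in M\}$ of $a\sigma$, the defining relations reduce modulo $Q$ to $(0,-\overline{\iota(m)})$ since $a\in Q$, and a direct calculation gives $X/QX\cong M/QM\oplus N/QN$.

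Since $M$ and $N$ are Ulrich, $M/QM$ and $N/QN$ are $k$-vector spaces, and hence so is $X/QX$, giving $\m X\subseteq QX$ as required. I expect the only step needing care to be the vanishing of the reduced extension class, which however turns on nothing more than $a\in Q$ and the $R$-linearity of base change for $\Ext$; the Cohen--Macaulayness of $N$ is needed only so that the reduced sequence stays short exact on the left.
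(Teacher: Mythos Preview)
Your proof is correct and follows essentially the same approach as the paper's: both arguments show that the reduced extension splits and then use that $M/QM$ and $N/QN$ are $k$-vector spaces. The paper invokes Striuli's theorem to see that $a\sigma\otimes_R R/aR$ splits and then tensors further with $R/Q$, whereas you go directly modulo $Q$ and supply the splitting argument yourself (via $R$-linearity of base change, or the explicit pushout computation); your version is thus slightly more self-contained but not conceptually different.
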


\begin{proof}
It follows from \cite[Theorem 1.1]{S} that the exact sequence
$$
a\sigma\otimes_RR/aR:\ 0\to M/aM\to X/aX\to N/aN\to0
$$
splits; we have an isomorphism $X/aX\cong M/aM\oplus N/aN$.
Applying the functor $-\otimes_{R/aR}R/Q$, we get an isomorphism $X/QX\cong M/QM\oplus N/QN$.
Since $M,N$ are Ulrich, the modules $M/QM,N/QN$ are $k$-vector spaces, and so is $X/QX$.
Hence $X$ is also Ulrich.
\end{proof}

As an application of the above proposition, we give a way to make an Ulrich module over a Cohen--Macaulay local ring with minimal multiplicity.

\begin{cor}\label{hp}
Let $Q$ be a parameter ideal of $R$ such that $\m^2=Q\m$.
Let $M$ be an Ulrich $R$-module.
Then for each $R$-regular element $a\in Q$, the syzygy $\syz(M/aM)$ is also an Ulrich $R$-module.
\end{cor}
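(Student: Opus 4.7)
The plan is to realize $\syz(M/aM)$ as the middle term of an extension produced by Proposition \ref{6}. Since $\m^2 = Q\m$ forces $R$ to have minimal multiplicity, Proposition \ref{2} applies: as $M$ is in particular (maximal) Cohen--Macaulay, its first syzygy $\syz M$ is Ulrich. Setting $n = \mu(M)$, the short exact sequence
$$
\sigma \colon 0 \to \syz M \to R^{\oplus n} \xrightarrow{\epsilon} M \to 0
$$
therefore has both end terms Ulrich, so Proposition \ref{6} produces an Ulrich module $X$ fitting into $a\sigma \colon 0 \to \syz M \to X \to M \to 0$.

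It remains to identify $X$ with $\syz(M/aM)$. I would realize $a\sigma$ as the pullback of $\sigma$ along multiplication by $a$ on $M$, so that explicitly $X = \{(r,m) \in R^{\oplus n} \oplus M : \epsilon(r) = am\}$. The projection $\pi \colon X \to R^{\oplus n}$, $(r,m) \mapsto r$, has image $\epsilon^{-1}(aM)$ and kernel $\{(0,m) \in X : am = 0\}$. Since $a \in \m$, one has $aM \subseteq \m M$ and hence $\mu(M/aM) = \mu(M) = n$; thus the composition $R^{\oplus n} \xrightarrow{\epsilon} M \twoheadrightarrow M/aM$ is a minimal surjection with kernel $\syz(M/aM)$, so the image of $\pi$ is exactly $\syz(M/aM)$. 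Moreover, since $M$ is maximal Cohen--Macaulay its associated primes are among the minimal primes of $R$, so the $R$-regularity of $a$ forces $a$ to be $M$-regular and the kernel of $\pi$ is trivial.

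Combining these two observations, $\pi$ induces an isomorphism $X \cong \syz(M/aM)$, and Proposition \ref{6} yields at once that $\syz(M/aM)$ is Ulrich. The only delicate point in this strategy is the pullback identification of $a\sigma$ with $\syz(M/aM)$ (checking that $\epsilon^{-1}(aM)$ really is the first syzygy, using the equality $\mu(M) = \mu(M/aM)$, and that the kernel collapses thanks to $M$-regularity of $a$); once this is in place, the result follows directly from Propositions \ref{2} and \ref{6}.
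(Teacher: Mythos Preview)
Your proof is correct and follows essentially the same approach as the paper: both realize $a\sigma$ as the pullback of the minimal presentation $\sigma$ along multiplication by $a$ on $M$, identify the middle term with $\syz(M/aM)$ using $\mu(M/aM)=\mu(M)$ and the $M$-regularity of $a$, and then invoke Propositions~\ref{2} and~\ref{6}. The only cosmetic difference is that the paper packages the pullback identification in a commutative diagram rather than working with explicit elements.
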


\begin{proof}
There is an exact sequence
$
\sigma:0\to\syz M\to R^{\oplus n}\to M\to0,
$
where $n$ is a minimal number of generators of $M$.
We have a commutative diagram
$$
\xymatrix@R-1pc{
&&& 0 & 0\\
&&& M/aM\ar[u]\ar@{=}[r] & M/aM\ar[u]\\
\sigma: & 0\ar[r] & \syz M\ar[r] & R^{\oplus n}\ar[r]\ar[u] & M\ar[r]\ar[u] & 0\\
a\sigma: & 0\ar[r] & \syz M\ar[r]\ar@{=}[u] & X\ar[r]\ar[u] & M\ar[r]\ar[u]^a & 0\\
&&& 0\ar[u] & 0\ar[u]
}
$$
with exact rows and columns.
Since the minimal number of generators of $M/aM$ is equal to $n$, the middle column shows $X\cong\syz(M/aM)$.
Propositions \ref{1} and \ref{6} show that $X$ is Ulrich, and we are done.
\end{proof}

\begin{rem}\label{2.5}
In Corollary \ref{hp}, if the parameter ideal $Q$ annihilates the $R$-module $\Ext_R^1(M,\syz M)$, then we have $a\sigma=0$, and $\syz(M/aM)\cong M\oplus\syz M$.
Hence, in this case, the operation $M\mapsto\syz(M/aM)$ does not produce an essentially new Ulrich module.
\end{rem}

Next, we investigate the annihilators of Tor and Ext modules.

\begin{prop}
For an $R$-module $M$ one has
\begin{alignat*}{2}
&\ann_R\Ext_R^1(M,\syz M)
& &=\textstyle\bigcap_{i>0,\,N\in\mod R}\ann_R\Ext_R^i(M,N)\\
=\,&\ann_R\Tor^R_1(M,\tr M)
& &=\textstyle\bigcap_{i>0,\,N\in\mod R}\ann_R\Tor_i^R(M,N).
\end{alignat*}
\end{prop}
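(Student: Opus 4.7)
The plan is to show that all four ideals coincide with the \emph{stable annihilator}
\[
\underline{\ann}_R M := \{a \in R : \text{multiplication by } a \text{ on } M \text{ factors through a free } R\text{-module}\},
\]
equivalently, the annihilator of $[\id_M]$ in the stable endomorphism ring $\underline{\End}_R(M) := \End_R(M)/P(M,M)$, where $P(M,M)$ denotes the ideal of endomorphisms factoring through a free module.

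For the Ext side, I would let $\epsilon \in \Ext_R^1(M,\syz M)$ denote the extension class of the canonical short exact sequence $0 \to \syz M \to F_0 \to M \to 0$ coming from the minimal free cover. Applying $\Hom_R(M,-)$ produces a connecting map $\delta\colon \End_R(M) \to \Ext_R^1(M,\syz M)$, $f \mapsto f_*\epsilon$, whose kernel is exactly the endomorphisms lifting along $F_0\to M$; specializing to $f = a\cdot\id_M$ gives $a\epsilon = 0$ iff $a \in \underline{\ann}_R M$. On the other hand, applying $\Hom_R(-,\syz M)$ and using $\Ext_R^1(F_0,\syz M) = 0$ shows every class in $\Ext_R^1(M,\syz M)$ has the form $g_*\epsilon$ for some $g\in\End_R(\syz M)$; so $a\epsilon=0$ already forces $a$ to kill the whole module. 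Finally, if $a\cdot\id_M$ factors as $M \to F \to M$ with $F$ free, then multiplication by $a$ on $\Ext_R^i(M,N)$ factors through $\Ext_R^i(F,N)=0$ for every $i\geq 1$ and every $N$. Together these facts give $\underline{\ann}_R M = \ann_R\Ext_R^1(M,\syz M) = \bigcap_{i>0,\,N\in\mod R}\ann_R\Ext_R^i(M,N)$.

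For the Tor side, the key input is Auslander's four-term exact sequence attached to a presentation $F_1 \to F_0 \to M \to 0$:
\[
0 \to \Tor_2^R(\tr M,N) \to M^*\otimes_R N \xrightarrow{\alpha_N} \Hom_R(M,N) \to \Tor_1^R(\tr M,N) \to 0, \qquad \alpha_N(\varphi\otimes n)(x)=\varphi(x)\,n.
\]
Taking $N=M$, the image of $\alpha_M$ is generated by rank-one compositions $M \xrightarrow{\varphi} R \to M$ (the second arrow sending $1$ to $m$), which are precisely the endomorphisms factoring through a free module; hence $\Tor_1^R(\tr M,M) \cong \underline{\End}_R(M)$. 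Since $[\id_M]$ is the unit of this ring, $\ann_R\Tor_1^R(\tr M,M) = \underline{\ann}_R M$, and symmetry of Tor together with the same factor-through-free argument for higher degrees yields $\underline{\ann}_R M = \ann_R\Tor_1^R(M,\tr M) = \bigcap_{i>0,\,N\in\mod R}\ann_R\Tor_i^R(M,N)$.

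The hard part, I expect, will be invoking Auslander's exact sequence correctly and verifying that the image of the evaluation map $\alpha_M$ is exactly the ideal of morphisms factoring through a free module; once that identification is in hand, everything else unwinds by routine long-exact-sequence chases and the vanishing $\Ext_R^i(F,-)=0=\Tor_i^R(F,-)$ for $i\geq 1$ and $F$ free.
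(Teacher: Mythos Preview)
Your proposal is correct and follows essentially the same approach as the paper. The paper's proof also hinges on the fact that $a$ annihilates $\Ext_R^1(M,\syz M)$ (respectively $\Tor_1^R(M,\tr M)$) if and only if the multiplication map $a\cdot\id_M$ factors through a free module, and then uses this factorization to kill all higher $\Ext$ and $\Tor$; the only difference is that the paper imports these two equivalences from \cite[Lemma 2.14]{ua} and \cite[Lemma (3.9)]{Y}, whereas you unpack them explicitly via the connecting map for $\epsilon$ and Auslander's four-term sequence identifying $\Tor_1^R(\tr M,M)\cong\lhom_R(M,M)$.
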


\begin{proof}
It is clear that
\begin{align*}
I:=\textstyle\bigcap_{i>0,\,N\in\mod R}\ann_R\Ext_R^i(M,N)
&\subseteq\ann_R\Ext_R^1(M,\syz M)\\
J:=\textstyle\bigcap_{i>0,\,N\in\mod R}\ann_R\Tor_i^R(M,N)
&\subseteq\ann_R\Tor^R_1(M,\tr M).
\end{align*}
It is enough to show that $\ann\Ext^1(M,\syz M)\cup\ann\Tor_1(M,\tr M)$ is contained in $I\cap J$.

(1) Take any element $a\in\ann_R\Ext_R^1(M,\syz M)$.
The proof of \cite[Lemma 2.14]{ua} shows that the multiplication map $(M\xrightarrow{a}M)$ factors through a free module, that is, $(M\xrightarrow{a}M)=(M\xrightarrow{f}F\xrightarrow{\pi}M)$ with $F$ free.
Hence, for all $i>0$ and $N\in\mod R$ we have commutative diagrams:
$$
\xymatrix@R-1pc@!C=30pt{
\Tor_i(M,N)\ar[rr]^a\ar[rd]_(.4){\Tor_i(f,N)\ } && \Tor_i(M,N)\\
& \Tor_i(F,N)\ar[ru]_(.6){\ \Tor_i(\pi,N)}
}
\qquad
\xymatrix@R-1pc@!C=30pt{
\Ext^i(M,N)\ar[rr]^a\ar[rd]_(.4){\Ext^i(\pi,N)\ } && \Ext^i(M,N)\\
& \Ext^i(F,N)\ar[ru]_(.6){\ \Ext^i(f,N)}
}
$$
As $\Tor_i(F,N)=\Ext^i(F,N)=0$, the element $a$ is in $I\cap J$.

(2) Let $b\in\ann_R\Tor_1^R(M,\tr M)$.
By \cite[Lemma (3.9)]{Y}, the element $b$ annihilates $\lhom_R(M,M)$.
Hence the map $b\cdot\id_M$, which is nothing but the multiplication map $(M\xrightarrow{b}M)$, factors through a free $R$-module.
Similarly to (1), we get $b$ is in $I\cap J$.
\end{proof}

\begin{dfn}
We denote by $\a M$ the ideal in the above proposition.
\end{dfn}

Note that $\a M=R$ if and only if $M$ is a free $R$-module.

For an $R$-module $M$ we denote by $\add M$ the subcategory of $\mod R$ consisting of direct summands of finite direct sums of copies of $M$.

With the notation of Remark \ref{2.5}, we are interested in when the operation $M\mapsto\syz(M/aM)$ actually gives rise to an essentially new Ulrich module.
The following result presents a possible way: if we choose an indecomposable Ulrich module $M$ that is not a direct summand of $\syz^dk$, then we find an indecomposable Ulrich module not isomorphic to $M$ among the direct summands of the modules $\syz(M/x_iM)$.

\begin{prop} \label{7b}
Suppose that $R$ is henselian.
Let $Q=(x_1,\dots,x_d)$ be a parameter ideal of $R$ which is a reduction of $\m$.
Let $M$ be an indecomposable Ulrich $R$-module.
If $M$ is a direct summand of $\syz(M/x_iM)$ for all $1\le i\le d$, then $M$ is a direct summand of $\syz^dk$.
\end{prop}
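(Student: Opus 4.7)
The plan is to trap $M$ as a direct summand of an iteratively-built Ulrich module, and then identify that module (up to free summands) with a direct sum of copies of $\syz^d k$, after which Krull--Schmidt---applicable because $R$ is henselian---finishes the argument.

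Set $M_0:=M$ and $M_i:=\syz(M_{i-1}/x_iM_{i-1})$ for $1\le i\le d$; by iterating Corollary \ref{hp}, each $M_i$ is an Ulrich $R$-module. First I would show by induction on $i$ that $M$ is a direct summand of $M_i$. The base case $i=0$ is trivial. For the inductive step, Krull--Schmidt gives $M_{i-1}\cong M\oplus Y$ for some $Y$, so additivity of $\syz(-)$ and of $(-)/x_i(-)$ over direct sums yields $M_i\cong\syz(M/x_iM)\oplus\syz(Y/x_iY)$. By hypothesis, $M$ is a direct summand of $\syz(M/x_iM)$, hence of $M_i$.

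Next I would identify $M_d$ up to free summands with $(\syz^d k)^{\mu(M)}$. The key input is that the Ulrich property forces $M/QM=M/\m M\cong k^{\mu(M)}$, so that $\syz^d(M/QM)=(\syz^d k)^{\mu(M)}$. I would argue by induction on $i$ that $M_i$ is stably isomorphic to $\syz^i(M/(x_1,\dots,x_i)M)$, by using horseshoe/Schanuel comparisons between the extension $0\to\syz M_{i-1}\to M_i\to M_{i-1}\to 0$ supplied by Corollary \ref{hp}, and the syzygy sequence coming from
\[
0\to M/(x_1,\dots,x_{i-1})M\xrightarrow{x_i}M/(x_1,\dots,x_{i-1})M\to M/(x_1,\dots,x_i)M\to 0.
\]
At $i=d$ this yields $M_d\cong(\syz^d k)^{\mu(M)}$ modulo free summands. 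Since $M_d$ is Ulrich and (in the nontrivial case) $R$ is not regular, $M_d$ has no free summands, whence $M_d\cong(\syz^d k)^{\mu(M)}$ genuinely. The regular case is immediate: then $M\cong R\cong\syz^d k$.

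Combining both steps, $M$ is a direct summand of $(\syz^d k)^{\mu(M)}$, and Krull--Schmidt then forces $M$ to be a direct summand of $\syz^d k$, as desired. The main obstacle will be the inductive identification of $M_d$: accurately tracking how iterated syzygies interact with quotienting by the parameters $x_i$, via a careful horseshoe comparison at each step.
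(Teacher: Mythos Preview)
Your approach is correct and genuinely different from the paper's.  The paper does not build the iterated modules $M_i$ at all; instead it extracts from the hypothesis that each $x_i$ annihilates $\Ext^1_R(M,\syz M)$ (since $\Ext^1_R(M,\syz M)$ is a summand of $\Ext^1_R(\syz(M/x_iM),\syz M)\cong\Ext^2_R(M/x_iM,\syz M)$), so $Q\subseteq\a M$, and then invokes \cite[Corollary 3.2(1)]{kos} to conclude directly that $M\mid\syz^d(M/QM)$.  Your route avoids that external citation by constructing the summand relation by hand, which is a genuine gain in self-containment; the cost is the bookkeeping in the identification $M_i\cong\syz^i(M/(x_1,\dots,x_i)M)$ up to free summands.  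That identification does go through: the cleanest way is to observe that tensoring the minimal free resolution of $L_{i-1}:=M/(x_1,\dots,x_{i-1})M$ with $R/x_iR$ gives $\syz_R^{i-1}(L_{i-1})/x_i\cong\syz_{R/x_iR}^{i-1}(L_i)$, and then to prove by induction (horseshoe plus Schanuel, using $\syz_R(R/x_iR)\cong R$) that $\syz_R(\syz_{R/x_iR}^{j}N)\cong\syz_R^{j+1}N$ stably for any $R/x_iR$-module $N$.  Your phrasing in terms of comparing two extension classes can also be pushed through, but is more delicate.

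One small correction: Corollary \ref{hp} assumes $\m^2=Q\m$, which Proposition \ref{7b} does not, so your assertion that each $M_i$ is Ulrich is not available.  Fortunately you do not need it.  The only place you use it is to strip free summands from $M_d$, but that is unnecessary: since $M$ is indecomposable and (in the singular case) not free, Krull--Schmidt applied to $M\mid M_d\cong(\syz^dk)^{\mu(M)}\oplus(\text{free})$ already gives $M\mid(\syz^dk)^{\mu(M)}$, hence $M\mid\syz^dk$.
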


\begin{proof}
For all integer $1\le i\le d$ the module $\Ext_R^1(M,\syz M)$ is a direct summand of $\Ext_R^1(\syz(M/x_iM),\syz M)$.
The latter module is annihilated by $x_i$ since it is isomorphic to $\Ext_R^2(M/x_iM,\syz M)$.
Hence $Q$ is contained in $\ann_R\Ext_R^1(M,\syz M)=\a M$, and therefore $Q\Ext_R^{>0}(M,N)=0$ for all $N\in\mod R$.
It follows from \cite[Corollary 3.2(1)]{kos} that $M$ is a direct summand of $\syz^d(M/QM)$.
As $M$ is Ulrich, the module $M/QM$ is a $k$-vector space, and $\syz^d(M/QM)$ belongs to $\add(\syz^dk)$, whence so does $M$.
Since $R$ is henselian and $M$ is indecomposable, the Krull--Schmidt theorem implies that $M$ is a direct summand of $\syz^dk$.
\end{proof}

\section{Comparison of $\ul(R)$ with $\ocms(R)$}\label{ocmsul}

In this section, we study the relationship of the Ulrich $R$-modules with the syzygies of Cohen--Macaulay $R$-modules.
We begin with giving equivalent conditions for a given Cohen--Macaulay module to be a syzygy of a Cohen--Macaulay module, after stating an elementary lemma.

\begin{lem}\label{52}
Let $M,N$ be $R$-modules.
The evaluation map $\ev:M\otimes_R\Hom_R(M,N)\to N$ is surjective if and only if there exists an epimorphism $(f_1,\dots,f_n):M^{\oplus n}\to N$.
\end{lem}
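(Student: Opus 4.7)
The plan is to identify the image of $\ev$ with the trace submodule $\tau(N):=\sum_{f\in\Hom_R(M,N)}f(M)\subseteq N$. Indeed, on pure tensors $\ev$ acts by $m\otimes f\mapsto f(m)$, so $\mathrm{Im}(\ev)=\tau(N)$. Once this is in place, the lemma reduces to the statement that $\tau(N)=N$ if and only if the images of some finite list $f_1,\dots,f_n\in\Hom_R(M,N)$ already sum to $N$, which in turn is equivalent to the surjectivity of the induced map $(f_1,\dots,f_n)\colon M^{\oplus n}\to N$.

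For the ``if'' direction, given a surjection $(f_1,\dots,f_n)\colon M^{\oplus n}\to N$, any $y\in N$ can be written as $y=\sum_{i=1}^n f_i(m_i)$ for some $m_i\in M$, so $y=\ev\bigl(\sum_{i=1}^n m_i\otimes f_i\bigr)$, giving surjectivity of $\ev$.

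For the converse, I would invoke the standing convention that all modules are finitely generated. Choose generators $y_1,\dots,y_k$ of $N$; by hypothesis each $y_j$ lies in $\mathrm{Im}(\ev)$, so it may be expressed as a finite sum $y_j=\sum_{i}f_{ji}(m_{ji})$ with $f_{ji}\in\Hom_R(M,N)$ and $m_{ji}\in M$. Collecting the finitely many homomorphisms $f_{ji}$ appearing in these expressions into a single list $f_1,\dots,f_n$ produces a map $(f_1,\dots,f_n)\colon M^{\oplus n}\to N$ whose image contains every $y_j$, and hence is surjective.

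There is no substantive obstacle; the only subtlety is the tacit use of finite generation of $N$ in passing from an arbitrary spanning family of homomorphisms to a finite one, and this is guaranteed by the paper's blanket convention.
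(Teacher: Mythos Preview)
Your proof is correct. The ``if'' direction is identical to the paper's. For the ``only if'' direction you take a slightly different path: the paper chooses a surjection $R^{\oplus n}\to\Hom_R(M,N)$ (using that $\Hom_R(M,N)$ is finitely generated over the Noetherian ring $R$) and tensors with $M$ to get the desired epimorphism $M^{\oplus n}\to M\otimes_R\Hom_R(M,N)\xrightarrow{\ev}N$ in one stroke, whereas you instead use finite generation of $N$ and collect the finitely many homomorphisms needed to hit a generating set. Both arguments are elementary and of the same flavor; the paper's is a touch slicker in that it produces the $f_i$ as a generating set of $\Hom_R(M,N)$ without any ad hoc choices, while yours makes the reliance on the finite-generation convention for $N$ more explicit.
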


\begin{proof}
The ``only if" part follows by taking an epimorphism $R^{\oplus n}\to\Hom_R(M,N)$ and tensoring $M$.
To show the ``if" part, pick any element $y\in N$.
Then we have $y=f_1(x_1)+\cdots+f_n(x_n)$ for some $x_1,\dots,x_n\in M$.
Therefore $y=\ev(\sum_{i=1}^nx_i\otimes f_i))$, and we are done.
\end{proof}

\begin{prop} \label{44}
Let $R$ be a Cohen--Macaulay local ring with canonical module $\omega$.
Then the following are equivalent for a Cohen--Macaulay $R$-module $M$.
\begin{enumerate}[\rm(1)]
\item
$M\in\ocm(R)$.
\item
$\lhom_R(M,\omega)=0$.
\item
There exists a surjective homomorphism $\omega^{\oplus n}\to \Hom_R(M,\omega)$.
\item
The natural homomorphism $\Phi:\omega\otimes_R \Hom_R(\omega, \Hom_R(M,\omega))\to \Hom_R(M,\omega)$ is surjective.
\item
$M$ is torsionless and $\tr\syz\tr M$ is Cohen--Macaulay.
\item
$\Ext^1_R(\tr M,R)=\Ext^1_R(\tr\syz\tr M,\omega)=0$.
\item
$\Tor_1^R(\tr M,\omega)=0$.
\end{enumerate}
\end{prop}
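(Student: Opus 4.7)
My plan is to organize the conditions into three tight clusters---a ``duality'' cluster (1, 3, 4), a ``transpose'' cluster (1, 5, 6), and a ``stable'' cluster (2, 7)---linked by a pair of bridging implications between (1) and (2). For the duality cluster, I would apply $(-)^\dag$ to a defining exact sequence $0 \to M \to F \to C \to 0$ of $M\in\ocm(R)$ (with $F$ free and $C$ Cohen--Macaulay); the vanishing $\Ext_R^1(C,\omega)=0$ turns this into a surjection $\omega^{\oplus n}=F^\dag\twoheadrightarrow M^\dag$, which is (3). Conversely, given such a surjection, its kernel is Cohen--Macaulay by the depth lemma, and re-dualizing (using $M\cong M^{\dag\dag}$, which holds since $M$ is Cohen--Macaulay) recovers an exact sequence $0\to M\to R^{\oplus n}\to K^\dag\to 0$ with $K^\dag$ Cohen--Macaulay, giving (1). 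The equivalence (3) $\iff$ (4) is then Lemma \ref{52} applied with source $\omega$ and target $M^\dag$.

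For the transpose cluster, starting from a free presentation $F_1\to F_0\to M\to 0$, the key tool is the Auslander--Bridger exact sequence
\[
0 \to M \to G^* \to \tr\syz\tr M \to 0
\]
with $G^*$ free, derived from the identification $\syz\tr M\cong F_0^*/M^*$ (read off the dual sequence), a free cover $G\twoheadrightarrow M^*$, and the torsionlessness of $M$. Since every module in $\ocm(R)$ is torsionless, this sequence identifies (1) with (5). For (5) $\iff$ (6), the first halves agree via the classical equivalence that $M$ is torsionless iff $\Ext_R^1(\tr M,R)=0$; for the second halves, dualizing the displayed sequence with $\Ext_R^{>0}(G^*,\omega)=0$ and $\Ext_R^{>0}(M,\omega)=0$ gives $\Ext_R^i(\tr\syz\tr M,\omega)\cong\Ext_R^{i-1}(M,\omega)=0$ for $i\geq2$, so Cohen--Macaulayness of $\tr\syz\tr M$ reduces to the single condition $\Ext_R^1(\tr\syz\tr M,\omega)=0$.

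For the stable cluster, (2) $\iff$ (7) is the classical Auslander--Bridger isomorphism $\lhom_R(M,N)\cong\Tor_1^R(\tr M,N)$ applied with $N=\omega$. To close the web, for (1) $\implies$ (2): any $M\to\omega$ extends to $F\to\omega$ by $\Ext_R^1(C,\omega)=0$, hence factors through the free module $F$.

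The main obstacle will be the converse (2) $\implies$ (1). Here I would take generators $f_1,\dots,f_m$ of $M^\dag$; by (2) each $f_i$ factors as $M\xrightarrow{g_i}P_i\xrightarrow{h_i}\omega$ with $P_i$ free, so the composite $g=(g_i)\colon M\to P:=\bigoplus_i P_i$ followed by $h=\bigoplus_i h_i\colon P\to\omega^{\oplus m}$ equals $(f_1,\dots,f_m)$. The latter factors as $M\cong M^{\dag\dag}\hookrightarrow\omega^{\oplus m}$ (the second arrow being the dual of the surjection $R^{\oplus m}\twoheadrightarrow M^\dag$, $e_i\mapsto f_i$), hence is injective, so $g\colon M\hookrightarrow P$ with $P$ free. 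Dualizing $0\to M\to P\to P/M\to 0$, the $f_i$ all lie in the image of $P^\dag\to M^\dag$, so this map is surjective and $\Ext_R^1(P/M,\omega)=0$; higher Ext's vanish via the long exact sequence together with $\Ext_R^{>0}(P,\omega)=0=\Ext_R^{>0}(M,\omega)$. Hence $P/M$ is Cohen--Macaulay, placing $M$ in $\ocm(R)$.
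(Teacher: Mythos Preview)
Your proof is correct and follows essentially the same route as the paper's: the same web of implications is established, with (1)$\iff$(3)$\iff$(4) via canonical duality and Lemma~\ref{52}, (2)$\iff$(7) via the Auslander--Bridger isomorphism $\lhom_R(M,N)\cong\Tor_1^R(\tr M,N)$, (1)$\implies$(2) by lifting along $\Ext^1_R(C,\omega)=0$, and (5)$\iff$(6) via the vanishing of higher $\Ext(\tr\syz\tr M,\omega)$ once $M$ is torsionless. The only differences are in execution: for (2)$\implies$(1) the paper begins with the canonical embedding $M\hookrightarrow\omega^{\oplus m}$ (with Cohen--Macaulay cokernel) and uses a pushout square to conclude that the cokernel $L$ of the factored map $M\to F$ is Cohen--Macaulay, whereas you build the same map $M\to P$ by factoring generators of $M^\dag$ and verify Cohen--Macaulayness of $P/M$ by direct $\Ext$ computation; and for (1)$\iff$(5) the paper simply cites \cite[Lemma 2.5]{Ko2}, whereas you supply the underlying Auslander--Bridger sequence $0\to M\to G^*\to\tr\syz\tr M\to 0$ explicitly.
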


\begin{proof}
(1) $\implies$ (2):
By the assumption, there is an exact sequence $0\to M \to F \to N \to 0$ such that $N$ is Cohen--Macaulay and $F$ is free.
Take $f\in\Hom_R(M,\omega)$.
There is a commutative diagram
$$
\xymatrix@R-1pc{
0 \ar[r] & M \ar[d]^f \ar[r] & F \ar[d] \ar[r] & N \ar@{=}[d] \ar[r] & 0\\
0 \ar[r] & \omega \ar[r] & W \ar[r] & N \ar[r] &0
}
$$
with exact rows.
Since $N$ is Cohen--Macaulay, we have $\Ext_R^1(N,\omega)=0$.
Hence the second row splits, and $f$ factors through $F$.
This shows $\lhom_R(M,\omega)=0$.

(2) $\implies$ (1):
There is an exact sequence $0\to M \xrightarrow{f} \omega^{\oplus m}\to N \to 0$ such that $N$ is Cohen--Macaulay.
Since $\lhom_R(M,\omega^{\oplus m})=\lhom_R(M,\omega)^{\oplus m}=0$, there are a free $R$-module $F$, homomorphisms $g:M\to F$ and $h:F \to \omega^{\oplus m}$ such that $f=hg$.
We get a commutative diagram
$$
\xymatrix@R-1pc{
0 \ar[r] & M \ar@{=}[d] \ar[r]^{g} & F \ar[d]^h \ar[r] & L \ar[d] \ar[r] & 0\\
0 \ar[r] & M \ar[r]^f & \omega^{\oplus m} \ar[r] & N \ar[r] & 0
}
$$
with exact rows.
The secound square is a pullback-pushout diagram, which gives an exact sequence $0 \to F \to L\oplus \omega^{\oplus m} \to N \to 0$.
This shows that $L$ is Cohen--Macaulay, and hence $M\in\ocm(R)$.

(2) $\iff$ (7):
This equivalence follows from \cite[Lemma (3.9)]{Y}.

(1) $\implies$ (3):
Let $0\to M \to R^{\oplus n} \to N \to 0$ be an exact sequence with $F$ free.
Applying $(-)^\dag$, we have an exact sequence $0 \to N^\dag \to \omega^{\oplus n} \to M^\dag \to 0$.

(3) $\implies$ (1):
There is an exact sequence $0\to K \to \omega^{\oplus n} \to M^\dag \to 0$.
It is seen that $K$ is Cohen--Macaulay.
Taking $(-)^\dag$ gives an exact sequence $0 \to M \to R^{\oplus n} \to K^\dag \to 0$, which shows $M\in\ocm(R)$.

(3) $\iff$ (4):
This follows from Lemma \ref{52}.

(5) $\iff$ (6):
The module $\tr\syz\tr M$ is Cohen--Macaulay if and only if $\Ext^i_R(\tr\syz\tr M,\omega)=0$ for all $i>0$.
One has $\Ext^1_R(\tr M,R)=0$ if and only if $M$ is torsionless, if and only if $M\cong\syz\tr\syz\tr M$ up to free summands; see \cite[Theorem (2.17)]{AB}.
Hence $\Ext^i_R(\tr\syz\tr M,\omega)=\Ext^{i-1}_R(M,\omega)=0$ for all $i>1$.

(1) $\iff$ (5):
This equivalence follows from \cite[Lemma 2.5]{Ko2} and its proof.
\end{proof}

\begin{rem}\label{46}
The equivalence (1) $\iff$ (5) in Proposition \ref{44} holds without the assumption that $R$ admits a canonical module.
Indeed, its proof does not use the existence of a canonical module.
\end{rem}

The property of being a syzygy of a Cohen--Macaulay module (without free summand) is preserved under faithfully flat extension.

\begin{cor}\label{45}
Let $R \to S$ be a faithfully flat homomorphism of Cohen--Macaulay local rings.
Let $M$ be a Cohen--Macaulay $R$-module.
Then $M\in \ocms(R)$ if and only if $M\otimes_R S \in \ocms(S)$.
\end{cor}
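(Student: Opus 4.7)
The plan is to split the equivalence into two parts: that $M\in\ocm(R) \iff M\otimes_RS\in\ocm(S)$, and that having a nonzero free summand is preserved and reflected under faithfully flat base change. (Note that $M\otimes_RS$ is automatically Cohen--Macaulay over $S$, as both $\depth$ and $\dim$ shift by the same quantity $\dim(S/\m S)=\depth(S/\m S)$.)

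For the first part, I would invoke the characterization Proposition \ref{44}(1) $\iff$ (5)---valid without a canonical module by Remark \ref{46}---which says that $M\in\ocm(R)$ if and only if $M$ is torsionless and $\tr\syz\tr M$ is Cohen--Macaulay. A finite free presentation of $M$ over $R$ tensors with $S$ to give one of $M\otimes_RS$ over $S$, and the natural isomorphism $\Hom_R(-,R)\otimes_RS\cong\Hom_S(-\otimes_RS,S)$ on finitely presented modules shows that both $\tr$ and $\syz$ commute with $-\otimes_RS$ up to free summands. Torsionless-ness, being the injectivity of $M\to M^{**}$, is both preserved and reflected by the faithfully flat map $R\to S$; Cohen--Macaulay-ness of modules is likewise preserved and reflected by faithfully flat local homomorphisms of Cohen--Macaulay local rings. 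Chaining these transfers the two-part condition of Proposition \ref{44}(5) back and forth across $-\otimes_RS$.

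For the second part, I would use the trace ideal description: a finitely generated $R$-module $N$ has a nonzero free summand if and only if $\tau_R(N):=\operatorname{Im}(\ev:\Hom_R(N,R)\otimes_RN\to R)$ equals $R$, equivalently $\tau_R(N)\not\subseteq\m$. Since $M$ is finitely presented, the base-change formula for $\Hom$ yields $\tau_R(M)\cdot S=\tau_S(M\otimes_RS)$, and faithful flatness gives $\tau_R(M)=R\iff\tau_R(M)S=S\iff\tau_S(M\otimes_RS)=S$. The main technical issue I expect is the ``up to free summand'' bookkeeping when identifying $\tr_S(M\otimes_RS)$ and $\syz_S(M\otimes_RS)$ with base changes of their $R$-counterparts, but since Cohen--Macaulay-ness is unaffected by adjoining or removing free summands this is purely a matter of careful notation rather than substance.
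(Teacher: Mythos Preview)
Your proposal is correct and follows essentially the same route as the paper: both use Proposition \ref{44}(5) via Remark \ref{46} to characterize membership in $\ocm(R)$ by conditions (torsionlessness, equivalently $\Ext^1_R(\tr M,R)=0$, together with Cohen--Macaulayness of $\tr\syz\tr M$) that transfer along faithfully flat base change, and both detect free summands via surjectivity of the evaluation map $M\otimes_R\Hom_R(M,R)\to R$ (your trace-ideal formulation is exactly the paper's use of Lemma \ref{52}). The only cosmetic difference is that the paper states torsionlessness as the Ext vanishing rather than injectivity of the biduality map.
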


\begin{proof}
Using Remark \ref{46}, we see that $M\in\ocm(R)$ if and only if $\Ext_R^1(\tr_RM,R)=0$ and $\tr_R\syz_R\tr_RM$ is Cohen--Macaulay.
Also, $M$ has a nonzero $R$-free summand if and only if the evaluation map $M\otimes_R\Hom_R(M,R)\to R$ is surjective by Lemma \ref{52}.
Since the latter conditions are both preserved under faithfully flat extension, they are equivalent to saying that $M\otimes_RS\in\ocm(S)$ and that $M\otimes_R S$ has a nonzero $S$-free summand, respectively.
Now the assertion follows.
\end{proof}

Next we state and prove a couple of lemmas.
The first one concerns Ulrich modules and syzygies of Cohen--Macaulay modules with respect to short exact sequences.

\begin{lem}\label{7.9}
Let $0\to L \to M \to N \to 0$ be an exact sequence of $R$-modules.
\begin{enumerate}[\rm(1)]
\item
If $L,M,N$ are in $\ul(R)$, then the equality $\mu(M)=\mu(L)+\mu(N)$ holds.
\item
Suppose that $L,M,N$ are in $\cm(R)$.
Then:\\
{\rm(a)} If $M$ is in $\ul(R)$, then so are $L$ and $N$.\qquad
{\rm(b)} If $M$ is in $\ocms(R)$, then so is $L$.
\end{enumerate}
\end{lem}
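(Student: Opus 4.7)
I will treat the three claims separately. For (1), the Ulrich characterization $\mu(X)=e(X)$ together with the additivity of the Hilbert--Samuel multiplicity on short exact sequences of maximal Cohen--Macaulay modules of the same dimension does the job: since $L,M,N$ are Ulrich and therefore maximal CM of dimension $d$, additivity gives $e(M)=e(L)+e(N)$, and the Ulrich condition translates this to $\mu(M)=\mu(L)+\mu(N)$.

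For (2)(a), I will first reduce to the case that $k$ is infinite via Remark \ref{infin}(2), and then pick a parameter ideal $Q$ of $R$ with $\m M=QM$ (which exists because $M$ is Ulrich over the now-infinite residue field). Since $Q$ is generated by a regular sequence on each of the maximal CM modules $L$, $M$, $N$, tensoring the short exact sequence with $R/Q$ stays exact, giving $0\to L/QL\to M/QM\to N/QN\to 0$. The middle term is annihilated by $\m$ by the Ulrich hypothesis, so its sub and quotient are too; hence $\m L=QL$ and $\m N=QN$, placing $L,N\in\ul(R)$.

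For (2)(b), I will first show $L\in\ocm(R)$: choose $0\to M\to F\to C\to 0$ with $F$ free and $C\in\cm(R)$ (available since $M\in\ocm(R)$); the chain $L\subseteq M\subseteq F$ together with the snake lemma gives $0\to N\to F/L\to C\to 0$, and the depth lemma forces $F/L\in\cm(R)$. To show $L$ has no free summand, write $L=L_0\oplus R^a$ with $L_0$ having no free summand and $a\ge 0$; it suffices to derive $a=0$. Form the short exact sequence $0\to R^a\to M\to M/R^a\to 0$, where $M/R^a\in\cm(R)$ because it sits in $0\to L_0\to M/R^a\to N\to 0$ with $L_0,N\in\cm(R)$. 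Dualizing by $(-)^\dag$ gives $0\to(M/R^a)^\dag\to M^\dag\xrightarrow{q}\omega^a\to 0$, and by Proposition \ref{44}(3) there is a surjection $p\colon\omega^n\twoheadrightarrow M^\dag$; the composition $qp\colon\omega^n\twoheadrightarrow\omega^a$ is then surjective.

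The main obstacle is splitting $qp$: once a section $s\colon\omega^a\to\omega^n$ with $qps=1$ is produced, the map $ps\colon\omega^a\to M^\dag$ is a section of $q$, so $\omega^a$ is a direct summand of $M^\dag$; dualizing back yields $R^a$ as a free summand of $M$, and then $M\in\ocms(R)$ forces $a=0$. To split $qp$ I will use that $\omega$ is indecomposable with $\End_R(\omega)=R$, so that a map $\omega^n\to\omega^a$ corresponds to an $a\times n$ matrix over $R$; surjectivity of $qp$ combined with Nakayama (applied after $-\otimes_R k$) forces this matrix to have a size-$a$ minor that is a unit in $R$, and the corresponding $a$ columns define an invertible submap $\omega^a\hookrightarrow\omega^n\to\omega^a$ whose inverse is the desired section.
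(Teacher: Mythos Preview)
Your arguments for (1) and (2)(a) are correct and essentially match the paper: the paper proves (1) identically and for (2)(a) simply cites \cite[Proposition (1.4)]{BHU}, whose proof is the one you wrote out.

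For (2)(b) your approach diverges from the paper's. The paper handles both ``$L\in\ocm(R)$'' and ``$L$ has no free summand'' in one stroke: from $0\to M\xrightarrow{\beta}R^{\oplus a}\to C\to 0$ with $C$ Cohen--Macaulay and $\mu(C)=a$ (minimality, since $M$ has no free summand), it forms the pushout of $\beta$ and the given map $M\to N$ to obtain $0\to L\to R^{\oplus a}\xrightarrow{\delta}D\to 0$ with $D$ Cohen--Macaulay, and the sandwich $a=\mu(C)\le\mu(D)\le a$ forces $\delta$ to be minimal, whence $L=\syz D\in\ocms(R)$. This is entirely elementary and uses nothing beyond the definition of $\ocms(R)$.

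Your route through $(-)^\dag$ and Proposition~\ref{44}(3) is correct \emph{when $R$ admits a canonical module}, but the lemma does not assume this, and Proposition~\ref{44} explicitly requires $\omega$. As written this is a gap. It is easily patched---pass to the completion using Corollary~\ref{45}, where $\omega$ exists---but you should say so. Even with that fix, your argument is noticeably heavier: it invokes canonical duality, the structure of $\End_R(\omega)$, and a matrix/Nakayama splitting, whereas the paper's pushout-plus-minimality argument stays at the level of free covers and works over any Cohen--Macaulay local ring.
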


\begin{proof}
(1) We have $\mu(M)=e(M)=e(L)+e(N)=\mu(L)+\mu(N)$.

(2) Assertion (a) follows by \cite[Proposition (1.4)]{BHU}.
Let us show (b).
As $M$ is in $\ocms(R)$, there is an exact sequence $0 \to M \xrightarrow{\beta} R^{\oplus a} \xrightarrow{\gamma} C \to 0$ with $C$ Cohen--Macaulay.
As $M$ has no free summand, $\gamma$ is a minimal homomorphism.
In particular, $\mu(C)=a$.
The pushout of $\beta$ and $\gamma$ gives a commutative diagram
$$
\xymatrix@R-1pc{
& & 0 \ar[d] & 0 \ar[d] &\\
0 \ar[r] & L \ar@{=}[d] \ar[r] & M \ar[d]^\beta \ar[r] & N \ar[d] \ar[r] & 0\\
0 \ar[r] & L \ar[r] & R^{\oplus a} \ar[d]^\gamma \ar[r]^\delta & D \ar[d] \ar[r] & 0\\
& & C \ar[d] \ar@{=}[r] & C \ar[d] &\\
& & 0 & 0 &
}
$$
with exact rows and columns.
We see that $a=\mu(C)\leq\mu(D)\leq a$, which implies that $\delta$ is a minimal homomorphism.
Hence $L=\syz D\in \ocms(R)$.
\end{proof}

The following lemma is used to reduce to the case of a lower dimensional ring.

\begin{lem} \label{7a}
Let $Q=(x_1,\dots,x_d)$ be a parameter ideal of $R$ that is a reduction of $\m$.
Let $M$ be a Cohen--Macaulay $R$-module.
Then $M$ is an Ulrich $R$-module if and only if $M/x_iM$ is an Ulrich $R/x_iR$-module.
\end{lem}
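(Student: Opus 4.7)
The plan is to reformulate being Ulrich as the single ideal equality $\m M = QM$, a condition which is visibly preserved and reflected by reduction modulo a single $x_i$.

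First I would establish the ideal-theoretic criterion: under the standing hypotheses ($M$ Cohen--Macaulay of dimension $d$ and $Q$ a parameter reduction of $\m$), $M$ is Ulrich if and only if $\m M = QM$. This uses two standard equalities, $e(M)=e_Q(M)=\ell(M/QM)$ (from $Q$ being a reduction of $\m$ and being generated by an $M$-regular sequence, since $M$ is Cohen--Macaulay) and $\mu(M)=\ell(M/\m M)$, together with the surjection $M/QM\twoheadrightarrow M/\m M$ coming from $QM\subseteq\m M$. These give $e(M)\geq\mu(M)$ with equality precisely when $QM=\m M$, i.e.\ when $M/QM$ is a $k$-vector space.

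Next I would set $\bar R=R/x_iR$, $\bar\m=\m\bar R$, and let $\bar Q$ be the image of $(x_1,\dots,\hat x_i,\dots,x_d)$ in $\bar R$. The two quick verifications I need are: $M/x_iM$ is Cohen--Macaulay of dimension $d-1$ over $\bar R$ (because $x_i$ is $M$-regular), and $\bar Q$ is a parameter ideal of $\bar R$ and a reduction of $\bar\m$ (obtained by reducing a relation $\m^{n+1}=Q\m^n$ modulo $x_i$). These place the hypotheses of the criterion above at the $\bar R$-level, so $M/x_iM$ is Ulrich over $\bar R$ iff $\bar\m(M/x_iM)=\bar Q(M/x_iM)$. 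Unwinding, this equality reads $\m M+x_iM=QM+x_iM$, which collapses to $\m M=QM$ because $x_i\in Q\subseteq\m$. This matches the Ulrich condition on $M$ from the first step.

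I expect no real obstacle here; the argument is bookkeeping once the ideal-theoretic reformulation is in hand. The only point needing (entirely routine) care is the compatibility of parameter reductions with quotienting by the regular element $x_i$, noted above.
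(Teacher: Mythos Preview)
Your proposal is correct and follows essentially the same route as the paper's proof: both reduce the Ulrich condition to the ideal equality $\m M=QM$, note that $Q/x_iR$ is a reduction of $\m/x_iR$, and observe that $\bar\m(M/x_iM)=\bar Q(M/x_iM)$ is equivalent to $\m M=QM$ since $x_iM\subseteq QM\subseteq\m M$. The paper's proof is simply terser, taking the criterion $\m M=QM$ for granted, while you spell it out.
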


\begin{proof}
Note that $Q/x_iR$ is a reduction of $\m/x_iR$.
We see that $(\m/x_iR)(M/x_iM)=(Q/x_iR)(M/x_iM)$ if and only if $\m M=QM$.
Thus the assertion holds.
\end{proof}

Now we explore syzygies of the residue field of a Cohen--Macaulay local ring with minimal multiplicity.

\begin{lem} \label{8}
Assume that $R$ is singular and has minimal multiplicity.
\begin{enumerate}[\rm(1)]
\item
One has $\syz^d_R k\in \ocms(R)$.
In particular, $\syz^d_R k$ is an Ulrich $R$-module.
\item
There is an isomorphism $\syz^{d+1}_R k\cong(\syz^d_R k)^{\oplus n}$ for some $n\ge0$.
\item
Let $Q=(x_1,\dots,x_d)$ be a parameter ideal of $R$ with $\m^2=Q\m$, and suppose that $d\ge1$.
Then $\syz^1_R(\syz^i_{R/(x_1)}k)\cong\syz^{i+1}_R k$ for all $i\ge0$.
In particular, $\syz^1_R(\syz^{d-1}_{R/(x_1)}k)\cong\syz^d_R k$.
\item 
For each $M\in\ul(R)$ there exists a surjective homomorphism $(\syz^d_R k)^{\oplus n} \to M$ for some $n\ge0$.
\end{enumerate}
\end{lem}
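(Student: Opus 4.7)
I would address the four parts in the order (2), (1), (3), (4), since (1) follows most cleanly from (2). For (2), consider the short exact sequence $0 \to Q \to \m \to \m/Q \to 0$. The hypothesis $\m^2 = Q\m$ forces $\m \cdot \m/Q = 0$, so $\m/Q \cong k^{\oplus(\edim R - d)}$. Since $x_1, \dots, x_d$ is a regular sequence, $R/Q$ has projective dimension $d$ via the Koszul complex, whence $Q$ has projective dimension $d-1$ and in particular $\syz^d_R Q = 0$. Applying the horseshoe lemma and extracting the $d$-th syzygies yields $\syz^d_R \m \cong (\syz^d_R k)^{\oplus(\edim R - d)}$, up to possibly a free summand coming from non-minimality of the horseshoe resolution; but both $(\syz^d_R k)^{\oplus(\edim R - d)}$ and the minimal $\syz^d_R \m$ have no free summand (by minimality of the resolutions of $k$ and of $\m$), so the two coincide. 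Since $\syz^d_R \m = \syz^{d+1}_R k$, this proves (2) with $n = \edim R - d$.

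For (1), the module $\syz^d_R k$ is maximal Cohen--Macaulay by the depth lemma, and the exact sequence $0 \to \syz^{d+1}_R k \to F_d \to \syz^d_R k \to 0$ then places $\syz^{d+1}_R k$ in $\ocms(R)$ (it is a first syzygy of the Cohen--Macaulay module $\syz^d_R k$, with no free summand by minimality). By (2), since $R$ is singular the multiplicity $\edim R - d$ is at least $1$, so $\syz^d_R k$ is a direct summand of $\syz^{d+1}_R k$; closure of $\ocms(R)$ under direct summands gives $\syz^d_R k \in \ocms(R)$. The Ulrich conclusion then follows from Proposition \ref{2}.

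For (3), I would argue by induction on $i \geq 0$, with the base $i = 0$ tautological. Let $\bar R = R/(x_1)$ and $b_j = \beta_j^{\bar R}(k)$. For the inductive step, apply the horseshoe lemma over $R$ to $0 \to \syz^i_{\bar R} k \to \bar R^{\oplus b_{i-1}} \to \syz^{i-1}_{\bar R} k \to 0$, using the $R$-free resolution $0 \to R \xrightarrow{x_1} R \to \bar R \to 0$. Extracting the first $R$-syzygies and invoking the inductive hypothesis $\syz^1_R(\syz^{i-1}_{\bar R} k) \cong \syz^i_R k$ produces an exact sequence $0 \to \syz^1_R(\syz^i_{\bar R} k) \to R^{\oplus(b_i + b_{i-1})} \to \syz^i_R k \to 0$. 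The standard Poincar\'e series identity $P^k_R(t) = (1+t) P^k_{\bar R}(t)$ (valid because $x_1 \in \m \setminus \m^2$ is a non-zero-divisor, via the Cartan--Eilenberg change-of-rings construction) gives $\beta_i^R(k) = b_i + b_{i-1}$, so the presentation above is minimal and the kernel is therefore $\syz^{i+1}_R k$, as required. The main technical ingredient is this Poincar\'e identity.

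For (4), let $M \in \ul(R)$. Applying $\Hom_R(-, M)$ to $0 \to \syz^d_R k \to F_{d-1} \to \syz^{d-1}_R k \to 0$ and using that $F_{d-1}$ is free gives a surjection $\Hom_R(\syz^d_R k, M) \twoheadrightarrow \Ext^d_R(k, M)$. Since $M$ is maximal Cohen--Macaulay, the sequence $x_1, \dots, x_d$ is $M$-regular; the change-of-rings isomorphism then gives $\Ext^d_R(k, M) \cong \Hom_{R/Q}(k, M/QM)$, which equals the socle of $M/QM$. For Ulrich $M$, $\m M = QM$ forces $M/QM$ to coincide with its socle, so $\Ext^d_R(k, M) \cong M/QM \cong k^{\oplus \mu(M)}$. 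Choose $\phi_1, \dots, \phi_{\mu(M)} \in \Hom_R(\syz^d_R k, M)$ whose images form a basis of $M/QM$. Comparing the minimal $R$-resolution of $k$ with the Koszul resolution of $R/Q$ via a chain map identifies the connecting homomorphism as $\phi \mapsto \phi(w) \bmod QM$ for a fixed element $w \in \syz^d_R k$; hence $\phi_i(w)$ realizes a basis of $M/QM$ modulo $QM$, so the induced map $(\syz^d_R k)^{\oplus \mu(M)} \to M$ is surjective modulo $QM = \m M$, and Nakayama's lemma concludes.
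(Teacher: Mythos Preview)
Your argument is correct throughout. For parts (1) and (2) the approach is essentially the paper's: both exploit that $\m/Q$ is a $k$-vector space, establish (2), and deduce (1) by passing to a direct summand. You use the sequence $0\to Q\to\m\to\m/Q\to0$, while the paper uses $0\to\m/Q\to R/Q\to k\to0$ and takes $d$th syzygies; the difference is cosmetic. One small omission: like the paper, you should remark that one may assume $k$ is infinite (via Remark \ref{infin}, Corollary \ref{45}, and Lemma \ref{52}) so that a parameter ideal $Q$ with $\m^2=Q\m$ actually exists.

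For parts (3) and (4) your routes differ genuinely from the paper's. In (3), you obtain the exact sequence $0\to\syz^1_R(\syz^i_{\bar R}k)\to R^{\oplus(b_i+b_{i-1})}\to\syz^i_Rk\to0$ via horseshoe and then invoke the Poincar\'e identity $\beta_i^R(k)=b_i+b_{i-1}$ to certify minimality; this is clean and direct. The paper instead appeals to the splitting $\syz^i_Rk/x_1\syz^i_Rk\cong\syz^i_{\bar R}k\oplus\syz^{i-1}_{\bar R}k$ from \cite{syz2}, combines it with Corollary \ref{hp} to obtain $\syz^i_Rk\oplus\syz^{i+1}_Rk\cong\syz_R(\syz^i_{\bar R}k)\oplus\syz_R(\syz^{i-1}_{\bar R}k)$, and finishes by eliminating possible free summands. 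Your argument avoids that external reference at the cost of needing $x_1\notin\m^2$, which (as you note) is forced by $\m^2=Q\m$.

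In (4), you compute $\Ext^d_R(k,M)\cong\Hom_{R/Q}(k,M/QM)=M/QM$ for Ulrich $M$ and then use the chain map from the Koszul complex to the minimal resolution of $k$ to produce a single element $w\in\syz^d_Rk$ through which the surjection is witnessed. The paper instead proceeds by induction on $d$: it passes to $\bar R=R/x_1R$, uses Lemma \ref{7a} and the inductive hypothesis to get a surjection $(\syz^{d-1}_{\bar R}k)^{\oplus n}\to M/x_1M$, takes first $R$-syzygies (using Lemma \ref{7.9} to keep track of minimal generators), and applies part (3) to identify $\syz_R(\syz^{d-1}_{\bar R}k)\cong\syz^d_Rk$. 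Your approach is more conceptual and does not rely on (3); the paper's is more elementary but leans on the other parts of the lemma.
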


\begin{proof}
(1)(2) We may assume that $k$ is infinite; see Remark \ref{infin}.
So we find a parameter ideal $Q=(x_1,\dots,x_d)$ of $R$ with $\m^2=Q\m$.
The module $\m/Q$ is a $k$-vector space, and there is an exact sequence $0\to k^{\oplus n} \to R/Q \to k \to 0$.
Taking the $d$th syzygies gives an exact sequence
$$
0\to (\syz^dk)^{\oplus n} \to R^{\oplus t} \to \syz^d k \to 0.
$$
Since $\syz^d k$ has no free summand by \cite[Theorem 1.1]{syz2}, we obtain $\syz^d k\in\ocms(R)$ and $(\syz^d k)^{\oplus n}\cong \syz^{d+1} k$.
The last assertion of (1) follows from this and Proposition \ref{2}.

(3) Set $x=x_1$.
We show that $\syz(\syz_{R/xR}^ik)\cong\syz^{i+1}k$ for all $i\ge0$.
We may assume $i\ge1$; note then that $x$ is $\syz^ik$-regular.
By \cite[Corollary 5.3]{syz2} we have an isomorphism $\syz^ik/x\syz^ik\cong\syz_{R/xR}^ik\oplus\syz_{R/xR}^{i-1}k$.
Hence
\begin{equation}\label{sss}
\syz^ik\oplus\syz^{i+1}k\cong\syz(\syz^ik/x\syz^ik)\cong\syz(\syz_{R/xR}^ik)\oplus\syz(\syz_{R/xR}^{i-1}k),
\end{equation}
where the first isomorphism follows from the proof of Corollary \ref{hp}.
There is an exact sequence
$
0 \to \syz_{R/xR}^ik \to (R/xR)^{\oplus a_{i-1}} \to \cdots\to(R/xR)^{\oplus a_0} \to k \to 0
$
of $R/xR$-modules, which gives an exact sequence
$$
0 \to \syz(\syz_{R/xR}^ik) \to R^{\oplus b_{i-1}} \to \cdots \to R^{\oplus b_0} \to \syz k \to 0
$$
of $R$-modules.
This shows $\syz(\syz_{R/xR}^ik)\cong\syz^{i+1}k\oplus R^{\oplus u}$ for some $u\ge0$, and similarly we have an isomorphism $\syz(\syz_{R/xR}^{i-1}k)\cong\syz^ik\oplus R^{\oplus v}$ for some $v\ge0$.
Substituting these in \eqref{sss}, we see $u=v=0$ and obtain an isomorphism $\syz(\syz_{R/xR}^ik)\cong\syz^{i+1}k$.

(4) According to Lemma \ref{52} and Remark \ref{infin}, we may assume that $k$ is infinite.
Take a parameter ideal $Q=(x_1,\dots,x_d)$ of $R$ with $\m^2=Q\m$.
We prove this by induction on $d$.
If $d=0$, then $M$ is a $k$-vector space, and there is nothing to show.
Assume $d\ge1$ and set $x=x_1$.
Clearly, $R/xR$ has minimal multiplicity.
By Lemma \ref{7a}, $M/xM$ is an Ulrich $R/xR$-module.
The induction hypothesis gives an exact sequence
$
0 \to L \to (\syz^{d-1}_{R/xR} k)^{\oplus n} \to M/xM \to 0
$
of $R/xR$-modules.
Lemma \ref{7.9}(2) shows that $L$ is also an Ulrich $R/xR$-module, while Lemma \ref{7.9}(1) implies
$$
\mu_{R/xR}(L)+\mu_{R/xR}(M/xM)=\mu_{R/xR}((\syz^{d-1}_{R/xR} k)^{\oplus n}).
$$
Note that $\mu_R(X)=\mu_{R/xR}(X)$ for an $R/xR$-module $X$.
Thus, taking the first syzygies over $R$, we get an exact sequence of $R$-modules:
$$
0 \to \syz L \to \syz(\syz^{d-1}_{R/xR} k)^{\oplus n} \to \syz(M/xM) \to 0.
$$
From the proof of Corollary \ref{hp} we see that there is an exact sequence $0\to\syz M\to\syz(M/xM)\to M\to0$, while $\syz(\syz^{d-1}_{R/xR} k)$ is isomorphic to $\syz^dk$ by (3).
Consequently, we obtain a surjection $(\syz^dk)^{\oplus n}\to M$.
\end{proof}

We have reached the stage to state and prove the main result of this section.

\begin{thm}\label{five}
Let $R$ be a $d$-dimensional Cohen--Macaulay local ring with residue field $k$ and canonical module $\omega$.
Suppose that $R$ has minimal multiplicity.
Then the following are equivalent.
\begin{enumerate}[\rm(1)]
\item
The equality $\ocms(R)=\ul(R)$ holds.
\item
For an exact sequence $M \to N \to 0$ in $\cm(R)$, if $M\in\ocms(R)$, then $N\in\ocms(R)$.
\item
The category $\ocms(R)$ is closed under $(-)^\dag$.
\item
The module $(\syz^dk)^\dag$ belongs to $\ocms(R)$.\quad
{\rm(4')} The module $(\syz^dk)^\dag$ belongs to $\ocm(R)$.
\item
One has $\lhom_R((\syz^dk)^\dag,\omega)=0$.
\item
One has $\Tor_1^R(\tr((\syz^dk)^\dag),\omega)=0$.
\item
One has $\Ext_R^{d+1}(\tr((\syz^dk)^\dag),R)=0$ and $R$ is locally Gorenstein on the punctured spectrum.
\item
The natural homomorphism $\omega\otimes_R\Hom_R(\omega,\syz^dk)\to\syz^dk$ is surjective.
\item
There exists a surjective homomorphism $\omega^{\oplus n}\to\syz^dk$.
\end{enumerate}
If $d$ is positive, $k$ is infinite and one of the above nine conditions holds, then $R$ is almost Gorenstein.
\end{thm}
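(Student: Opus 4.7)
By the equivalence of (1)--(9), I may assume condition (9): there is an epimorphism $\omega^{\oplus n}\twoheadrightarrow\syz^dk$. The goal is to produce an exact sequence $0\to R\to\omega\to C\to 0$ exhibiting $R$ as almost Gorenstein in the sense of Goto--Takahashi--Taniguchi; that is, with $C$ either zero or Cohen--Macaulay of dimension $d-1$ and $\m C=\mathfrak{q}C$ for some parameter ideal $\mathfrak{q}$ of $R/\ann_RC$.

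The Gorenstein case is trivial, so I assume $R$ is not Gorenstein. By the equivalent condition (7), $R$ is locally Gorenstein on the punctured spectrum; combined with $d\ge 1$ this forces $R$ to be generically Gorenstein, so $\omega$ is faithful and admits $R$-regular elements. Using that $k$ is infinite, I fix a parameter ideal $Q=(x_1,\dots,x_d)$ with $\m^2=Q\m$, and I choose a generic minimal generator $\xi\in\omega\setminus\m\omega$ avoiding every minimal prime of $R$, so that the associated homomorphism $\iota\colon R\to\omega$, $\iota(1)=\xi$, is injective. Applying the depth lemma to $0\to R\to\omega\to C\to 0$ together with the genericity of $\xi$, the cokernel $C=\omega/R\xi$ is Cohen--Macaulay of dimension exactly $d-1$.

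The core step is to verify $\m C=(x_1,\dots,x_{d-1})C$, equivalently $\m\omega\subseteq R\xi+(x_1,\dots,x_{d-1})\omega$. I would reduce to the one-dimensional case by factoring out the regular sequence $x_1,\dots,x_{d-1}$ (regular on $R$, $\omega$, and $\syz^dk$), producing $\overline R:=R/(x_1,\dots,x_{d-1})$, a one-dimensional Cohen--Macaulay local ring of minimal multiplicity with infinite residue field. Via Lemma \ref{8}(3) and \cite[Corollary 5.3]{syz2}, the surjection in (9) descends to a surjection $\omega_{\overline R}^{\oplus n}\twoheadrightarrow\m_{\overline R}$ after projecting onto the $\syz^1_{\overline R}k=\m_{\overline R}$ summand of $\syz^d_Rk/(x_1,\dots,x_{d-1})\syz^d_Rk$. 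This forces the trace identity $\tr_{\overline R}(\omega_{\overline R})=\m_{\overline R}$. Combining this with $(\m_{\overline R})^2=\bar x_d\m_{\overline R}$ and a length computation using that $\overline R/\bar x_d\overline R$ has maximal ideal squaring to zero, I would upgrade the trace identity to the geometric containment $\m_{\overline R}\omega_{\overline R}\subseteq\overline R\bar\xi$ for a suitable $\bar\xi\in\omega_{\overline R}\setminus\m_{\overline R}\omega_{\overline R}$; this lifts along $x_1,\dots,x_{d-1}$ to the desired inclusion over $R$, producing the almost-Gorenstein sequence.

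The main obstacle is the one-dimensional length computation, which must promote the trace identity $\tr(\omega)=\m$ (equivalent to the nearly Gorenstein property) to the strictly stronger containment $\m\omega\subseteq R\xi$; this step relies essentially on minimal multiplicity, since nearly Gorenstein does not imply almost Gorenstein in general. A secondary delicate point is verifying the compatibility of the generic $\xi\in\omega$ chosen in dimension $d$ with its image $\bar\xi$ in $\omega_{\overline R}$ and with the descended surjection, so that the one-dimensional conclusion genuinely lifts back to dimension $d$.
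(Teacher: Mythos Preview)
Your strategy is genuinely different from the paper's, and the two obstacles you flag are real and essentially unresolved in your sketch.

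The paper argues by induction on $d$. In the inductive step it passes to $\overline R=R/(x_1)$ and shows only that condition (4') descends: from an exact sequence $0\to(\syz^dk)^\dag\to R^{\oplus m}\to N\to0$ it obtains $\overline{(\syz^dk)^\dag}\in\ocm(\overline R)$, then uses $\overline{(\syz^dk)^\dag}\cong\Hom_{\overline R}(\overline{\syz^dk},\overline\omega)$ together with \cite[Corollary 5.3]{syz2} to extract the summand $\Hom_{\overline R}(\syz^{d-1}_{\overline R}k,\overline\omega)\in\ocm(\overline R)$. Induction then gives that $\overline R$ is almost Gorenstein, and the lift back to $R$ is outsourced to \cite[Theorem 3.7]{almgor}. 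The base case $d=1$ is likewise outsourced: using \eqref{50} to identify $\m^\dag$ with a colon ideal, the paper invokes \cite[Theorem 2.14]{Ko}.

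Your plan, by contrast, tries to build the almost-Gorenstein sequence $0\to R\to\omega\to C\to0$ directly and to verify $\m C=(x_1,\dots,x_{d-1})C$ by reducing modulo $x_1,\dots,x_{d-1}$ all at once. The two places you mark as obstacles correspond precisely to the two citations the paper relies on. Your one-dimensional ``length computation'' promoting $\mathrm{tr}_{\overline R}(\omega_{\overline R})=\m_{\overline R}$ to $\m_{\overline R}\omega_{\overline R}\subseteq\overline R\bar\xi$ is the content of \cite[Theorem 2.14]{Ko}; you have not carried it out, and it does need the minimal-multiplicity hypothesis in an essential way. Your ``compatibility'' issue---making the $\xi$ chosen generically in dimension $d$ match the $\bar\xi$ that works in dimension $1$, and ensuring $x_1,\dots,x_{d-1}$ is actually a parameter system for the particular $C$ you produced---is exactly what \cite[Theorem 3.7]{almgor} packages; note that as written you fix $Q$ before $\xi$, so there is no reason the $x_i$ are $C$-regular.

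A smaller point: Lemma \ref{8}(3) and \cite[Corollary 5.3]{syz2} are stated for reduction by a single element; your descent of (9) modulo $d-1$ elements at once needs an iterated argument that you have not spelled out. In sum, your route is viable but would amount to reproving the two cited theorems; the paper's induction sidesteps both difficulties by staying inside the equivalent conditions (1)--(9) and quoting those results.
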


\begin{proof}
(1) $\implies$ (2):
This follows from Lemma \ref{7.9}(2).

(2) $\implies$ (3):
Let $M$ be an $R$-module in $\ocms(R)$.
Then $M\in \ul(R)$ by Proposition \ref{2}, and hence $M^\dag \in \ul(R)$ by Corollary \ref{5.6}.
It follows from Lemma \ref{8}(4) that there is a surjection $(\syz^d k)^{\oplus n}\to M^\dag$.
Since $(\syz^dk)^{\oplus n}$ is in $\ocms(R)$ by Lemma \ref{8}(1), the module $M^\dag$ is also in $\ocms(R)$. 

(3) $\implies$ (4):
Lemma \ref{8}(1) says that $\syz^d k$ is in $\ocms(R)$, and so is $(\syz^d k)^\dag$ by assumption.

(4) $\implies$ (1):
The inclusion $\ocms(R)\subseteq\ul(R)$ follows from Proposition \ref{2}.
Take any module $M$ in $\ul(R)$.
Then $M^\dag$ is also in $\ul(R)$ by Corollary \ref{5.6}.
Using Lemma \ref{8}(4), we get an exact sequence $0\to X \to (\syz^d k)^{\oplus n} \to M^\dag \to 0$ of Cohen--Macaulay modules, which induces an exact sequence $0\to M \to (\syz^dk)^{\dag \oplus n} \to X^\dag \to 0$.
The assumption and Lemma \ref{7.9}(2) imply that $M$ is in $\ocms(R)$.

(4) $\iff$ (4'):
As $R$ is singular, by \cite[Corollary 4.4]{syz2} the module $(\syz^dk)^\dag$ does not have a free summand.

(4') $\iff$ (5) $\iff$ (6) $\iff$ (8) $\iff$ (9):
These equivalences follow from Proposition \ref{44}.

(4') $\iff$ (7):
We claim that, under the assumption that $R$ is locally Gorenstein on the punctured spectrum, $(\syz^dk)^\dag\in\ocm(R)$ if and only if $\Ext_R^{d+1}(\tr((\syz^dk)^\dag),R)=0$.
In fact, since $(\syz^dk)^\dag$ is Cohen--Macaulay, it satisfies Serre's condition $(S_d)$.
Therefore it is $d$-torsionfree, that is, $\Ext_R^i(\tr((\syz^dk)^\dag),R)=0$ for all $1\le i\le d$; see \cite[Theorem 2.3]{tc}.
Hence, $\Ext_R^{d+1}(\tr((\syz^dk)^\dag),R)=0$ if and only if $(\syz^dk)^\dag$ is $(d+1)$-torsionfree, if and only if it belongs to $\ocm(R)$ by \cite[Theorem 2.3]{tc} again.
Thus the claim follows.

According to this claim, it suffices to prove that if (4') holds, then $R$ is locally Gorenstein on the punctured spectrum.
For this, pick any nonmaximal prime ideal $\p$ of $R$.
There are exact sequences
$$
0 \to \syz^dk \to R^{\oplus a_{d-1}} \to\cdots\to R^{\oplus a_0}\to k\to0,\qquad
0 \to(\syz^dk)_\p\to R_\p^{\oplus a_{d-1}} \to\cdots\to R_\p^{\oplus a_0}\to 0.
$$
We observe that $(\syz^dk)_\p$ is a free $R_\p$-module with $\rank_{R_\p}((\syz^dk)_\p)=\sum_{i=0}^{d-1}(-1)^ia_{d-1-i}=\rank_R(\syz^dk)$.
The module $\syz^dk$ has positive rank as it is torsionfree, and we see that $(\syz^dk)_\p$ is a nonzero free $R_\p$-module.
Since we have already shown that (4') implies (9), there is a surjection $\omega^{\oplus n}\to\syz^dk$.
Localizing this at $\p$, we see that $\omega_\p^{\oplus n}$ has an $R_\p$-free summand, which implies that the $R_\p$-module $R_\p$ has finite injective dimension.
Thus $R_\p$ is Gorenstein.

So far we have proved the equivalence of the conditions (1)--(9).
It remains to prove that $R$ is almost Gorenstein under the assumption that $d$ is positive, $k$ is infinite and (1)--(9) all hold.
We use induction on $d$.

Let $d=1$.
Let $Q$ be the total quotient ring of $R$, and set $E=\End_R(\m)$.
Let $K$ be an $R$-module with $K\cong\omega$ and $R\subseteq K\subseteq\overline R$ in $Q$, where $\overline R$ is the integral closure of $R$.
Using \cite[Proposition 2.5]{O}, we have:
\begin{equation}\label{50}
\m\cong\Hom_R(\m,R)=E\quad\text{and}\quad\m^\dag\cong\Hom_R(\m,K)\cong(K:_Q\m).
\end{equation}
By (4) the module $\m^\dag$ belongs to $\ocms(R)$.
It follows from \cite[Theorem 2.14]{Ko} that $R$ is almost Gorenstein; note that the completion of $R$ also has Gorenstein punctured spectrum by (4').

Let $d>1$.
Since $(\syz^d k)^\dag\in\ocm(R)$, there is an exact sequence $0 \to (\syz^dk)^\dag \to R^{\oplus m} \to N \to0$ for some $m\ge0$ and $N\in\cm(R)$.
Choose a parameter ideal $Q=(x_1,\dots,x_d)$ of $R$ satisfying the equality $\m^2=Q\m$, and set $\overline{(-)}=(-)\otimes_RR/(x_1)$.
An exact sequence
$$
0 \to \overline{(\syz^d k)^\dag}\to \overline{R}^{\oplus m} \to \overline N \to 0
$$
is induced, which shows that $\overline{(\syz^d k)^\dag}$ is in $\ocm(\overline R)$.
Applying $(-)^\dag$ to the exact sequence $0\to\syz^dk\xrightarrow{x}\syz^dk\to\overline{\syz^dk}\to0$ and using \cite[Lemma 3.1.16]{BH}, we obtain isomorphisms
$$
\overline{(\syz^dk)^\dag}\cong\Ext_R^1(\overline{\syz^dk},\omega)\cong\Hom_{\overline R}(\overline{\syz^dk},\overline\omega).
$$
The module $\syz_{\overline R}^{d-1}k$ is a direct summand of $\overline{\syz^dk}$ by \cite[Corollary 5.3]{syz2}, and hence $\Hom_{\overline R}(\syz_{\overline R}^{d-1}k,\overline\omega)$ is a direct summand of $\Hom_{\overline R}(\overline{\syz^dk},\overline\omega)$.
Summarizing these, we observe that $\Hom_{\overline R}(\syz_{\overline R}^{d-1}k,\overline\omega)$ belongs to $\ocm(\overline R)$.
Since $\overline R$ has minimal multiplicity, we can apply the induction hypothesis to $\overline R$ to conclude that $\overline R$ is almost Gorenstein, and so is $R$ by \cite[Theorem 3.7]{almgor}.
\end{proof}

\begin{rem}
When $d\ge2$, it holds that
$$
\Ext_R^{d+1}(\tr((\syz^dk)^\dag),R)\cong\Ext_R^{d-1}(\Hom_R(\omega,\syz^dk),R).
$$
Thus Theorem \ref{five}(7) can be replaced with the condition that $\Ext_R^{d-1}(\Hom_R(\omega,\syz^dk),R)=0$.

Indeed, using the Hom-$\otimes$ adjointness twice, we get isomorphisms
$$
\Hom_R(\omega,\syz^dk)\cong\Hom_R(\omega,(\syz^dk)^{\dag\dag})\cong\Hom_R((\syz^dk)^\dag\otimes_R\omega,\omega)\cong\Hom_R((\syz^dk)^\dag,\omega^\dag)\cong(\syz^dk)^{\dag *},
$$
and $(\syz^dk)^{\dag *}$ is isomorphic to $\syz^2\tr((\syz^dk)^\dag)$ up to free summand.
\end{rem}

We have several more conditions related to the equality $\ocms(R)=\ul(R)$.

\begin{cor}\label{fouro}
Let $R$ be as in Theorem \ref{five}.
Consider the following conditions:\par
{\rm(1)} $(\syz^dk)^\dag\cong\syz^dk$,\quad
{\rm(2)} $(\syz^dk)^\dag\in\add(\syz^dk)$,\quad
{\rm(3)} $\a(\syz^dk)^\dag=\m$,\quad
{\rm(4)} $\ocms(R)=\ul(R)$.\\
It then holds that {\rm(1)} $\Longrightarrow$ {\rm(2)} $\Longleftrightarrow$ {\rm(3)} $\Longrightarrow$ {\rm(4)}.
\end{cor}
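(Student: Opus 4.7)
The plan is to establish (1) $\Rightarrow$ (2) $\Rightarrow$ (4) together with the equivalence (2) $\iff$ (3), leveraging Theorem \ref{five} to handle (2) $\Rightarrow$ (4) and the techniques of Proposition \ref{7b} for (2) $\iff$ (3).

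The implication (1) $\Rightarrow$ (2) is immediate from the definition of $\add$. For (2) $\Rightarrow$ (4): by Lemma \ref{8}(1) the module $\syz^dk$ lies in $\ocm(R)$, and since $\ocm(R)$ is closed under finite direct sums and direct summands, (2) yields $(\syz^dk)^\dag \in \ocm(R)$, which is condition (4') of Theorem \ref{five} and hence gives $\ocms(R) = \ul(R)$.

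For (2) $\Rightarrow$ (3), I would dimension-shift along the minimal free resolution of $k$: because $\syz^dk$ has no nonzero free summand by \cite[Theorem 1.1]{syz2}, one has $\Ext^i_R(\syz^dk, N) \cong \Ext^{i+d}_R(k, N)$ for every $i \geq 1$ and every $N \in \mod R$, and this is a $k$-vector space. Any direct summand of a finite direct sum of copies of $\syz^dk$ inherits the property that all its positive-degree Ext modules are killed by $\m$, so $\m \subseteq \a((\syz^dk)^\dag)$. Since $(\syz^dk)^\dag$ has no free summand by \cite[Corollary 4.4]{syz2} (as already used in the (4) $\iff$ (4') step of Theorem \ref{five}), it is not free, so $\a((\syz^dk)^\dag) \neq R$, forcing $\a((\syz^dk)^\dag) = \m$.

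The main obstacle will be (3) $\Rightarrow$ (2). I plan to mimic the proof of Proposition \ref{7b}: after reducing to $k$ infinite via Remark \ref{infin} and Corollary \ref{45} (condition (3) transfers along the faithfully flat extension because $\a$ is compatible with flat base change), pick a parameter ideal $Q$ of $R$ with $\m^2 = Q\m$. Then $Q \subseteq \m = \a((\syz^dk)^\dag)$, and \cite[Corollary 3.2(1)]{kos} yields that $(\syz^dk)^\dag$ is a direct summand of $\syz^d((\syz^dk)^\dag/Q(\syz^dk)^\dag)$. By Corollary \ref{5.6}, $(\syz^dk)^\dag$ is Ulrich, so the quotient coincides with $(\syz^dk)^\dag/\m(\syz^dk)^\dag$, a $k$-vector space; consequently $\syz^d$ of it lies in $\add(\syz^dk)$, which delivers (2).
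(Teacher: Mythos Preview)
Your proof is correct and follows essentially the same path as the paper's: (3) $\Rightarrow$ (2) via the argument of Proposition \ref{7b}, and (2) $\Rightarrow$ (4) via Lemma \ref{8}(1) together with Theorem \ref{five}; the paper simply declares (2) $\Rightarrow$ (3) obvious where you spell out the dimension-shift. Your extra care in passing to an infinite residue field for (3) $\Rightarrow$ (2) addresses something the paper leaves implicit, though note that Remark \ref{infin} and Corollary \ref{45} do not by themselves give the needed descent of condition (2) back to $R$ --- that requires a short separate check (membership in $\add N$ amounts to the identity of $M$ lying in the image of the composition map $\Hom_R(N^{\oplus n},M)\otimes_R\Hom_R(M,N^{\oplus n})\to\End_R(M)$, a condition preserved and reflected by faithfully flat base change).
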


\begin{proof}
The implications (1) $\Rightarrow$ (2) $\Rightarrow$ (3) are obvious.
The proof of Proposition \ref{7b} shows that if an Ulrich $R$-module $M$ satisfies $\a M=\m$, then $M$ is in $\add(\syz^dk)$.
This shows (3) $\Rightarrow$ (2).
Proposition \ref{8}(1) says that $\syz^dk$ is in $\ocms(R)$, and so is $(\syz^d k)^\dag$ by assumption.
Theorem \ref{five} shows (2) $\Rightarrow$ (4).
\end{proof}

We close this section by constructing an example by applying the above corollary.

\begin{ex}
Let $S=\C[[x,y,z]]$ be a formal power series ring.
Let $G$ be the cyclic group $\frac{1}{2}(1,1,1)$, and let $R=S^G$ be the invariant (i.e. the second Veronese) subring of $S$.
Then $\ocms(R)=\ul(R)$.
In fact, by \cite[Proposition (16.10)]{Y}, the modules
$
R,\,\omega,\,\syz\omega
$
are the nonisomorphic indecomposable Cohen--Macaulay $R$-modules and $(\syz\omega)^\dag\cong\syz\omega$.
By \cite[Theorem 4.3]{syz2} the module $\syz^2\C$ does not have a nonzero free or canonical summand.
Hence $\syz^2\C$ is a direct sum of copies of $\syz\omega$, and thus $(\syz^2\C)^\dag\cong\syz^2\C$.
The equality $\ocms(R)=\ul(R)$ follows from Corollary \ref{fouro}.
\end{ex}

\section{Applications}\label{app}

This section is devoted to stating applications of our main theorems obtained in the previous section.

\subsection{The case of dimension one}
We begin with studying the case where $R$ has dimension $1$.

\begin{thm} \label{alm}
Let $(R,\m,k)$ be a $1$-dimensional Cohen--Macaulay local ring with $k$ infinite and canonical module $\omega$.
Suppose that $R$ has minimal multiplicity, and set $(-)^\dag=\Hom_R(-,\omega)$.
Then
$$
\ocms(R)=\ul(R)\ \Longleftrightarrow\ 
\m^\dag\in\ocms(R)\ \Longleftrightarrow\ 
\m^\dag\cong\m\ \Longleftrightarrow\ 
\text{$R$ is almost Gorenstein.}
$$
\end{thm}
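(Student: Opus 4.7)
Since $d=1$, $\syz^d_R k=\m$ and $(\syz^d_R k)^\dag=\m^\dag$, so Theorem~\ref{five} applies directly, yielding the equivalence $\ocms(R)=\ul(R)\iff\m^\dag\in\ocms(R)$ (its (1)$\iff$(4)) and, via its final assertion, the implication $\m^\dag\in\ocms(R)\Rightarrow R$ is almost Gorenstein (invoking \cite[Theorem 2.14]{Ko}). It therefore remains to prove the reverse implication $R$ almost Gorenstein $\Rightarrow\m^\dag\in\ocms(R)$ and to slot the condition $\m^\dag\cong\m$ into the equivalence.

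The plan is to reinterpret $\m^\dag$ as the canonical module of the endomorphism ring $E:=\End_R(\m)$. Formula~\eqref{50} from the proof of Theorem~\ref{five} gives $\m\cong E$ as $R$-modules, and hence $\m^\dag\cong\Hom_R(E,\omega)=:\omega_E$ is the canonical module of the module-finite birational extension $E\supseteq R$; under minimal multiplicity $E=\m^{-1}=x^{-1}\m$ for the $R$-regular element $x$ with $\m^2=x\m$. The key external input is the classical dimension-one characterization (Barucci--Fr\"oberg, Goto--Matsuoka--Phuong): under minimal multiplicity, $R$ is almost Gorenstein iff $E$ is a Gorenstein ring. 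Granting this, the equivalence $R$ almost Gorenstein $\iff\m^\dag\cong\m$ follows from the chain $E$ Gorenstein $\iff\omega_E\cong E$ as $E$-modules $\iff\omega_E\cong E$ as $R$-modules $\iff\m^\dag\cong\m$. The only nontrivial step is the middle one: any $R$-isomorphism between regular fractional $E$-ideals sitting inside the total quotient ring $Q$ of $R$ is given by multiplication by a unit of $Q^\times$ and is therefore automatically $E$-linear, so an $R$-isomorphism $\omega_E\cong E$ upgrades for free to an $E$-isomorphism.

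To complete the loop, we produce from almost Gorensteinness a surjection $\omega^n\twoheadrightarrow\m$, which then forces $\m^\dag\in\ocms(R)$ by Theorem~\ref{five}(4)$\iff$(9). With $K\subseteq Q$ a canonical ideal realizing $\omega$, almost Gorensteinness means $\m K\subseteq R$; combined with $R\subseteq K$ this gives $K\subseteq\m^{-1}=E$ and hence $\m K=\m$, so that $(\m:_Q K)\cdot K=\m$. Any finite $R$-generating set $q_1,\dots,q_n$ of $(\m:_Q K)$ then produces maps $K\to\m$, $k\mapsto q_i k$, whose images jointly generate $\m$, yielding the required $\omega^n\twoheadrightarrow\m$. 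The principal obstacle in the proof is supplying a clean reference for the dimension-one classical equivalence ``$R$ almost Gorenstein $\iff E$ Gorenstein'' under minimal multiplicity; once this input is granted, the remainder is routine fractional-ideal bookkeeping inside $Q$.
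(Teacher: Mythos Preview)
Your proof is correct, but it diverges from the paper's in how it closes the loop. The paper's argument is very short: (i)~$\iff$~(ii) and (ii)~$\implies$~(iv) come from Theorem~\ref{five} exactly as you say; then (iii)~$\iff$~(iv) is obtained by a direct citation of \cite[Theorem~2.14]{Ko} together with the identifications in \eqref{50}, and (iii)~$\implies$~(ii) is immediate from Lemma~\ref{8}(1) since $\m\in\ocms(R)$.

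You instead establish (iii)~$\iff$~(iv) by passing through the endomorphism ring $E=\End_R(\m)$: you identify $\m^\dag$ with the canonical module $\omega_E$, invoke the Barucci--Fr\"oberg/Goto--Matsuoka--Phuong characterization ``$R$ almost Gorenstein $\iff E$ Gorenstein'' (valid under minimal multiplicity), and then argue that an $R$-isomorphism of regular fractional $E$-ideals is automatically $E$-linear. This is a genuinely different route from the paper's single black-box citation, and it has the merit of making the mechanism visible. Your separate direct argument for (iv)~$\implies$~(ii), producing a surjection $\omega^{\oplus n}\twoheadrightarrow\m$ from $\m K=\m$ and then invoking Theorem~\ref{five}(9), is also correct but redundant once you have (iv)~$\iff$~(iii): the implication (iii)~$\implies$~(ii) is a one-liner via Lemma~\ref{8}(1), which you could note to streamline the write-up. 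The trade-off is that the paper's proof is shorter because the equivalence $\m^\dag\cong\m\iff R$ almost Gorenstein is exactly what \cite[Theorem~2.14]{Ko} records, whereas your version trades one external reference for another (the GMP/Barucci--Fr\"oberg result) plus some transparent fractional-ideal bookkeeping.
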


\begin{proof}
Call the four conditions (i)--(iv) from left to right.
The implications (i) $\iff$ (ii) $\implies$ (iv) are shown by Theorem \ref{five}, while (iii) $\iff$ (iv) by \cite[Theorem 2.14]{Ko} and \eqref{50}.
Lemma \ref{8}(1) shows (iii) $\implies$ (ii).
\end{proof}

Now we pose a question related to Question \ref{1}.

\begin{ques}
Can we classify $1$-dimensional Cohen--Macaulay local rings $R$ with minimal multiplicity (and infinite residue field) satisfying the condition $\#\ind\ul(R)<\infty$?
\end{ques}

According to Proposition \ref{2}, over such a ring $R$ we have the property that $\#\ind\ocm(R)<\infty$, which is studied in \cite{Ko2}.
If $R$ has finite Cohen--Macaulay representation type (that is, if $\#\ind\cm(R)<\infty$), then of course this question is affirmative.
However, we do not have any partial answer other than this.
The reader may wonder if the condition $\#\ind\ul(R)<\infty$ implies the equality $\ocms(R)=\ul(R)$.
Using the above theorem, we observe that this does not necessarily hold:

\begin{ex}
Let $R=k[[t^3,t^7,t^8]]$ be (the completion of) a numerical semigroup ring, where $k$ is an algebraically closed field of characteristic zero.
Then $R$ is a Cohen--Macaulay local ring of dimension $1$ with minimal multiplicity.
It follows from \cite[Theorem A.3]{HUB} that $\#\ind\ul(R)<\infty$.
On the other hand, $R$ is not almost-Gorenstein by \cite[Example 4.3]{GMP}, so $\ocms(R)\ne\ul(R)$ by Theorem \ref{alm}.
\end{ex}

\subsection{The case of dimension two}

From now on, we consider the case where $R$ has dimension $2$.
We recall the definition of a Cohen--Macaulay approximation.
Let $R$ be a Cohen--Macaulay local ring with canonical module.
A homomorphism $f:X\to M$ of $R$-modules is called a {\em Cohen--Macaulay approximation} (of $M$) if $X$ is Cohen--Macaulay and any homomorphism $f':X'\to M$ with $X'$ being Cohen--Macaulay factors through $f$.
It is known that $f$ is a (resp. minimal) Cohen--Macaulay approximation if and only if there exists an exact sequence
$$
0 \to Y \xrightarrow{g} X \xrightarrow{f} M \to 0
$$
of $R$-modules such that $X$ is Cohen--Macaulay and $Y$ has finite injective dimension (resp. and that $X,Y$ have no common direct summand along $g$).
For details of Cohen--Macaulay approximations, we refer the reader to \cite[Chapter 11]{LW}.

The module $E$ appearing in the following remark is called the {\em fundamental module} of $R$.

\begin{rem}\label{fr}
Let $(R,\m,k)$ be a $2$-dimensional Cohen--Macaulay local ring with canonical module $\omega$.
\begin{enumerate}[(1)]
\item
There exists a nonsplit exact sequence
\begin{equation}\label{fs}
0 \to \omega \to E \to \m \to 0
\end{equation}
which is unique up to isomorphism.
This is because $\Ext_R^1(\m,\omega)\cong\Ext_R^2(k,\omega)\cong k$.
\item
The module $E$ is Cohen--Macaulay and uniquely determined up to isomorphism.
\item
The sequence \eqref{fs} gives a minimal Cohen--Macaulay approximation of $\m$.
\item
There is an isomorphism $E\cong E^\dag$.
In fact, applying $(-)^\dag$ to \eqref{fs} induces an exact sequence
$$
0 \to \m^\dag\to E^\dag\to R\to\Ext_R^1(\m,\omega)\to\Ext_R^1(E,\omega)=0.
$$
Applying $(-)^\dag$ to the natural exact sequence $0\to\m\to R\to k\to0$ yields $\m^\dag\cong\omega$, while $\Ext_R^1(\m,\omega)\cong k$.
We get an exact sequence $0\to\omega\to E^\dag\to\m\to0$, and the uniqueness of \eqref{fs} shows $E^\dag\cong E$.
\end{enumerate}
\end{rem}

To prove the main result of this section, we prepare two lemmas.
The first one relates the fundamental module of a $2$-dimensional Cohen--Macaulay local ring $R$ with $\ul(R)$ and $\ocms(R)$.

\begin{lem} \label{4ff}
Let $(R,\m,k)$ be a $2$-dimensional Cohen--Macaulay local ring with canonical module $\omega$ and fundamental module $E$.
\begin{enumerate}[\rm(1)]
\item
Assume that $R$ has minimal multiplicity.
Then $E$ is an Ulrich $R$-module.
\item
For each module $M\in\ocms(R)$ there exists an exact sequence
$
0 \to M \to E^{\oplus n} \to N \to 0
$
of $R$-modules such that $N$ is Cohen--Macaulay.
\end{enumerate}
\end{lem}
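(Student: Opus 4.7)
For part (1), my plan is to verify $\mu(E)=e(E)$ by combining multiplicity additivity with the minimal multiplicity formula for the Cohen--Macaulay type. After reducing to the infinite residue field case via Remark \ref{infin}, I choose a parameter ideal $Q$ with $\m^2=Q\m$. Additivity of the Hilbert--Samuel multiplicity on the fundamental sequence (using $e(k)=0$ in dimension $2$) gives $e(E)=e(\omega)+e(\m)=2e(R)=2(\edim R-1)$. When $R$ is non-regular, the identity $(Q:_R\m)=\m$ (which holds once $\m\supsetneq Q$, i.e.\ $e(R)\ge 2$) shows that $\operatorname{soc}(R/Q)=\m/Q$ has length $e(R)-1$, so the Cohen--Macaulay type satisfies $r(R)=\mu(\omega)=\edim R-2$. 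The Tor long exact sequence from tensoring the fundamental sequence with $k$ then yields $\mu(E)\le\mu(\omega)+\mu(\m)=2(\edim R-1)=e(E)$. Since $\mu(E)\le e(E)$ always for maximal Cohen--Macaulay modules, equality throughout forces $E$ to be Ulrich, provided the connecting map $\Tor_1^R(\m,k)\to\omega\otimes_R k$ vanishes. The regular case ($E\cong R^{\oplus 2}$ is free and trivially Ulrich over regular $R$) will be handled directly.

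For part (2), given $M\in\ocms(R)$ with defining sequence $0\to M\to R^{\oplus a}\to C\to 0$ ($C$ Cohen--Macaulay by definition of $\ocm(R)$), my plan is to embed $R$ into $E$ and take direct sums. After reducing to the infinite residue field case, I choose a sufficiently general element $e\in E$ such that the multiplication map $R\to E$, $r\mapsto re$, is injective with Cohen--Macaulay cokernel $E/Re$: injectivity requires $\operatorname{ann}_R(e)=0$, which holds generically since $E$ is torsion-free, and Cohen--Macaulayness of the rank-$1$ quotient $E/Re$ (equivalently, its reflexivity) will be ensured by prime avoidance against the locus of non-reflexive quotients. Taking direct sums gives $0\to R^{\oplus a}\to E^{\oplus a}\to (E/Re)^{\oplus a}\to 0$, and composing $M\hookrightarrow R^{\oplus a}\hookrightarrow E^{\oplus a}$ yields the desired sequence $0\to M\to E^{\oplus a}\to N\to 0$. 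The cokernel $N$ fits into $0\to C\to N\to (E/Re)^{\oplus a}\to 0$ by the snake lemma; both outer terms being Cohen--Macaulay, $N$ is Cohen--Macaulay by the depth lemma.

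The main obstacles are, for part (1), verifying the vanishing of the Tor connecting map, which reduces to a Yoneda-product analysis involving the extension class $[E]\in\Ext^1(\m,\omega)\cong k$ and uses the defining property that $E$ is maximal Cohen--Macaulay; and for part (2), producing the generic element $e\in E$ with reflexive cokernel $E/Re$, which relies on the infinite residue field reduction together with prime avoidance against finitely many lower-dimensional loci in $E$.
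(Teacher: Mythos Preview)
Both parts of your proposal have genuine gaps.

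\textbf{Part (1).} Your numerics are correct: with minimal multiplicity and $R$ singular one has $e(E)=2e(R)=2(\edim R-1)$ and $\mu(\omega)+\mu(\m)=r(R)+\edim R=2(\edim R-1)$, so everything hinges on the injectivity of $\omega\otimes_Rk\to E\otimes_Rk$, i.e.\ the vanishing of the connecting map $\delta:\Tor_1^R(\m,k)\to\omega\otimes_Rk$. But you do not prove this; the phrase ``Yoneda-product analysis \dots\ uses the defining property that $E$ is maximal Cohen--Macaulay'' is not an argument. The MCM property of $E$ is a statement about $\Ext^{>0}_R(E,\omega)$, and there is no evident mechanism by which it controls this Tor connecting map. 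In fact, the vanishing of $\delta$ is \emph{equivalent} to $\mu(E)=\mu(\omega)+\mu(\m)$, which is what you are trying to prove, so an independent input is required. The paper bypasses this entirely: it reduces modulo a general $x\in Q$ and identifies $E/xE$ with the minimal Cohen--Macaulay approximation of $\m/x\m\cong(\m/xR)\oplus k$ over $R/xR$, obtaining $E/xE\cong(\m/xR)\oplus\Hom_{R/xR}(\m/xR,\omega/x\omega)$; both summands are Ulrich over the $1$-dimensional ring $R/xR$, hence $E/xE$ is Ulrich, and then so is $E$ by Lemma~\ref{7a}.

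\textbf{Part (2).} Here the approach actually fails. You need $e\in E$ with $Re\cong R$ and $E/Re$ Cohen--Macaulay. From $0\to R\xrightarrow{e}E\to E/Re\to 0$ and $\Ext^1_R(E,\omega)=0$ one computes $\Ext^1_R(E/Re,\omega)=\operatorname{coker}\bigl(\ev_e:E^\dag\to\omega\bigr)$, so $E/Re$ is Cohen--Macaulay iff $\ev_e$ is surjective. When $R$ is Gorenstein ($\omega=R$), surjectivity of $\ev_e:E^*\to R$ says exactly that the inclusion $R\hookrightarrow E$ splits, i.e.\ $R$ is a direct summand of $E$. But $E$ is indecomposable whenever $R$ is not a cyclic quotient singularity (see \cite[Theorem 11.12]{Y}); for instance, over the $E_8$ singularity $\C[[x,y,z]]/(x^2+y^3+z^5)$ no such $e$ exists. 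Your prime-avoidance sketch therefore cannot succeed, and the reduction to infinite residue field does not rescue it (nor does it descend back to $R$). The paper's argument goes in the opposite direction: because $M$ has no free summand, the embedding $M\hookrightarrow R^{\oplus n}$ factors through $\m^{\oplus n}$, and then one uses that $E^{\oplus n}\to\m^{\oplus n}$ is a Cohen--Macaulay approximation to lift $M\to\m^{\oplus n}$ through $E^{\oplus n}$; a pushout diagram then shows the cokernel is Cohen--Macaulay.
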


\begin{proof}
(1) We may assume that $k$ is infinite by Remark \ref{infin}(2).
Let $Q=(x,y)$ be a parameter ideal of $R$ with $\m^2=Q\m$.
We have $\m/x\m\cong\m/(x)\oplus k$; see \cite[Corollary 5.3]{syz2}.
Note that $(\m/(x))^2=y(\m/(x))$.
By \cite[Corollary 2.5]{K} the minimal Cohen--Macaulay approximation of $\m/x\m$ as an $R/(x)$-module is $E/xE$.
In view of the proof of \cite[Proposition 11.15]{LW}, the minimal Cohen--Macaulay approximations of $\m/(x)$ and $k$ as $R/(x)$-modules are $\m/(x)$ and $\Hom_{R/(x)}(\m/(x),\omega/x\omega)$, respectively.
Thus we get an isomorphism
$$
E/xE\cong\m/(x)\oplus\Hom_{R/(x)}(\m/(x),\omega/x\omega).
$$
In particular, $E/xE$ is an Ulrich $R/(x)$-module by Lemma \ref{8}(1) and Corollary \ref{5.6}.
It follows from Lemma \ref{7a} that $E$ is an Ulrich $R$-module.

(2) Take an exact sequence $0\to M \xrightarrow{f} R^{\oplus n} \xrightarrow{e} L \to 0$ such that $L$ is Cohen--Macaulay.
As $M$ has no free summand, the homomorphism $e$ is minimal.
This means that $f$ factors through the natural inclusion $i:\m^{\oplus n} \to R^{\oplus n}$, that is, $f=ig$ for some $g\in\Hom_R(M,\m^{\oplus n})$.
The direct sum $p:E^{\oplus n} \to \m^{\oplus n}$ of copies of the surjection $E\to\m$ (given by \eqref{fs}) is a Cohen--Macaulay approximation.
Hence there is a homomorphism $h:M\to E^{\oplus n}$ such that $g=ph$.
We get a commutative diagram
$$
\xymatrix@R-1pc{
0 \ar[r] & M \ar[r]^f  & R^{\oplus n} \ar[r] & L \ar[r] & 0\\
0 \ar[r] & M \ar@{=}[u] \ar[r]^h & E^{\oplus n} \ar[r] \ar[u]^{ip} & N \ar[r] \ar[u] & 0
}
$$
with exact rows.
This induces an exact sequence $0 \to E^{\oplus n} \to R^{\oplus n}\oplus N \to L \to 0$, and therefore $N$ is a Cohen--Macaulay $R$-module.
\end{proof}

A short exact sequence of Ulrich modules is preserved by certain functors:

\begin{lem} \label{4fff}
Let $0\to X \to Y \to Z \to 0$ be an exact sequence of modules in $\ul(R)$.
Then it induces exact sequences of $R$-modules
\begin{enumerate}[\qquad\rm(a)]
\item
$0 \to X\otimes_R k \to Y\otimes_R k \to Z\otimes_R k \to 0$,
\item
$0 \to \Hom_R(Z,k)\to \Hom_R(Y,k) \to \Hom_R(X,k) \to 0$, and
\item
$0 \to \Hom_R(Z,(\syz^d k)^\dag) \to \Hom_R(Y,(\syz^d k)^\dag) \to \Hom_R(X,(\syz^d k)^\dag) \to 0$.
\end{enumerate}
\end{lem}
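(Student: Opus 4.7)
My plan is to handle (a), (b), (c) in turn; (c) is by far the main obstacle.

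For (a), apply the right exact functor $-\otimes_R k$ to obtain the right exact sequence $X\otimes_R k \to Y\otimes_R k \to Z\otimes_R k \to 0$ of $k$-vector spaces. By Lemma \ref{7.9}(1), $\mu(Y)=\mu(X)+\mu(Z)$, and since $\dim_k(M\otimes_R k)=\mu(M)$ for any finitely generated $M$, a dimension count forces the leftmost map to be injective. This gives the required short exact sequence.

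For (b), I would use the natural adjunction $\Hom_R(M,k)\cong\Hom_k(M\otimes_R k,k)$. The sequence in (b) is obtained by applying the $k$-linear dual $\Hom_k(-,k)$ to the short exact sequence of finite-dimensional $k$-vector spaces from (a); since $\Hom_k(-,k)$ is exact on such spaces, (b) follows.

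For (c), set $T:=\syz^d k$. The core move is the tensor--Hom adjunction together with the symmetry of the tensor product: for any finitely generated $M$,
\[
\Hom_R(M,W) \;=\; \Hom_R(M,\Hom_R(T,\omega)) \;\cong\; \Hom_R(M\otimes_R T,\omega) \;=\; (M\otimes_R T)^\dag \;\cong\; \Hom_R(T,M^\dag).
\]
Using the right-hand isomorphism, (c) is equivalent to the assertion that $\Hom_R(T,-)$ is exact on the dualized sequence $0\to Z^\dag\to Y^\dag\to X^\dag\to 0$, which again lies in $\ul(R)$ by Corollary \ref{5.6} and the exactness of $(-)^\dag$ on short exact sequences of maximal Cohen--Macaulay modules. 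Thus (c) amounts to showing that $T=\syz^d k$ is a projective object of the prospective exact category $(\ul(R),\s)$, i.e.\ that every short exact sequence $0\to A\to B\to T\to 0$ with $A,B\in\ul(R)$ splits. The splitting will be constructed by exploiting the inclusion $T\hookrightarrow R^n$ afforded by $T\in\ocms(R)$ (Lemma \ref{8}(1)), together with the projectivity of $R^n$ and the numerical constraints on Ulrich extensions coming from Lemma \ref{7.9}(1); concretely, the composition $B\twoheadrightarrow T\hookrightarrow R^n$ together with a pushout/pullback manipulation and the short exact sequence $0\to T\to R^n\to\syz^{d-1}k\to 0$ is expected to yield a section $T\to B$.

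The hardest step, and the place where I expect the main obstacle, is justifying that the diagrammatic construction really produces the splitting: one must verify that the relevant Tor/Ext classes vanish or are absorbed by the Ulrich structure, rather than producing an obstruction. It is precisely here that the minimal multiplicity hypothesis $\mathfrak{m}^2=Q\mathfrak{m}$ interacts with the Ulrich condition $\mathfrak{m}M=QM$ to force the rigidity needed. After that technical lemma is in place, the reversal of the adjunction $(\star)$ immediately turns the exactness of $\Hom_R(T,-)$ on the dualized sequence back into the claimed exactness of $\Hom_R(-,W)$ on the original sequence, completing (c).
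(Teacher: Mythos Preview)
Your arguments for (a) and (b) are correct; (a) matches the paper's proof, and (b) is a clean variant of the paper's ``dual argument''.

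For (c), your adjunction reduction $\Hom_R(M,(\syz^dk)^\dag)\cong\Hom_R(\syz^dk,M^\dag)$ is valid and natural in $M$, so (c) is indeed equivalent to the projectivity of $\syz^dk$ in the exact structure $(\ul(R),\s)$. But you never prove that projectivity: the phrases ``is expected to yield a section'' and ``after that technical lemma is in place'' are not a proof, and you explicitly flag this as the main obstacle without resolving it. I do not see how the embedding $\syz^dk\hookrightarrow R^n$ together with pushout/pullback yoga produces a section of an arbitrary Ulrich surjection $B\twoheadrightarrow\syz^dk$; the minimal-multiplicity hypothesis does not obviously force the relevant $\Ext^1$ class to vanish. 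Worse, there is a logical hazard: in the paper, Lemma~\ref{4fff}(c) is exactly the input used to show that $(\syz^dk)^\dag$ is injective in $(\ul(R),\s)$, and projectivity of $\syz^dk$ is then deduced by canonical duality. So your plan reverses the paper's order of implications and leaves the genuinely new content unproved.

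The paper's argument for (c) is short and bypasses all of this. One uses that $(\syz^dk)^\dag$ is the minimal Cohen--Macaulay approximation of $k$, giving an exact sequence $0\to I\to(\syz^dk)^\dag\to k\to 0$ with $I$ of finite injective dimension. Since $X,Y,Z$ are maximal Cohen--Macaulay, $\Ext_R^1(-,I)$ vanishes on them, so $\Hom_R(-,I)$ is exact on the given sequence; and $\Hom_R(-,k)$ is exact on it by (b). Two applications of $\Hom_R(-,\cdot)$ to the sequence $0\to I\to(\syz^dk)^\dag\to k\to 0$ and the snake lemma then force $\Hom_R(Y,(\syz^dk)^\dag)\to\Hom_R(X,(\syz^dk)^\dag)$ to be surjective. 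Note this does not even require minimal multiplicity.
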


\begin{proof}
The sequence $X\otimes_R k \to Y\otimes_R k \to Z\otimes_R k \to 0$ is exact and the first map is injective by Lemma \ref{7.9}(1).
Hence (a) is exact, and so is (b) by a dual argument.
In what follows, we show that (c) is exact.
We first note that $(\syz^d k)^\dag$ is a minimal Cohen--Macaulay approximation of $k$; see the proof of \cite[Proposition 11.15]{LW}.
Thus there is an exact sequence $0 \to I \to (\syz^d k)^\dag \to k \to 0$ such that $I$ has finite injective dimension.
As $\ul(R)\subseteq\cm(R)$, we have $\Ext_R^1(M,I)=0$ for all $M\in\{X,Y,Z\}$.
We obtain a commutative diagram
$$
\xymatrix@R-1pc{
0 \ar[r] & \Hom_R(Y,I) \ar@{->>}[d]^\alpha \ar[r] & \Hom_R(Y, (\syz^d k)^\dag) \ar[d]^\beta \ar[r] & \Hom_R(Y,k) \ar[d]^\gamma\ar[r] & 0\\
0 \ar[r] & \Hom_R(X,I)  \ar[r] & \Hom_R(X, (\syz^d k)^\dag) \ar[r] & \Hom_R(X,k) \ar[r] & 0}
$$
with exact rows, where $\alpha$ is surjective.
The exactness of (b) implies that $\gamma$ is surjective.
By the snake lemma $\beta$ is also surjective, and therefore (c) is exact.
\end{proof}

Now we can state and show our main result in this section.

\begin{thm}\label{four}
Let $R$ be a $2$-dimensional complete singular normal local ring with residue field $\C$ and having minimal multiplicity.
Suppose that $R$ does not have a cyclic quotient singularity.
Then:
$$
(\syz^dk)^\dag\cong\syz^dk\ \Longleftrightarrow\ 
(\syz^dk)^\dag\in\add(\syz^dk)\ \Longleftrightarrow\ 
\a(\syz^dk)^\dag=\m\ \Longleftrightarrow\ 
\ocms(R)=\ul(R).
$$
\end{thm}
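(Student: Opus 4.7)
The chain $(1)\Rightarrow(2)\Leftrightarrow(3)\Rightarrow(4)$ is already established in Corollary~\ref{fouro}, so the plan is to close the loop by proving $(4)\Rightarrow(1)$. This is where the two-dimensional hypothesis and the exclusion of cyclic quotient singularities will be used.

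Assume $\ocms(R)=\ul(R)$. First I would apply Theorem~\ref{five} with $d=2>0$ and $k=\C$ infinite to conclude that $R$ is almost Gorenstein. Next, I would invoke a classification of $2$-dimensional complete normal almost Gorenstein $\C$-algebras of minimal multiplicity, to the effect that such a ring is either Gorenstein or a cyclic quotient singularity. Since $R$ is not a cyclic quotient by hypothesis, $R$ must be Gorenstein; a $2$-dimensional complete normal Gorenstein $\C$-algebra of minimal multiplicity is a du Val (Kleinian) singularity $\C[[x,y]]^G$ for some finite $G\subset SL(2,\C)$, and ruling out the cyclic $A_n$ case leaves $R$ of type $D_n$, $E_6$, $E_7$, or $E_8$.

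Once $R$ is realized as a non-cyclic Kleinian singularity, $\omega\cong R$ so $(\syz^{2}k)^\dag\cong(\syz^{2}k)^{*}$, and the task reduces to verifying $(\syz^{2}k)^{*}\cong\syz^{2}k$. My plan is to use the McKay correspondence: it puts the indecomposable Cohen--Macaulay $R$-modules in bijection with irreducible representations of $G$, with $M\mapsto M^{*}$ corresponding to dualization of representations. For $G$ binary dihedral, binary octahedral, or binary icosahedral, every irreducible representation has real-valued character and hence is self-dual, so every indecomposable Cohen--Macaulay $R$-module is self-dual and $(\syz^{2}k)^{*}\cong\syz^{2}k$ is automatic. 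For $G$ binary tetrahedral (type $E_6$), however, there are two complex-conjugate pairs of irreducible representations that are not individually self-dual, and one must check that these pairs occur with equal multiplicities in the McKay decomposition of $\syz^{2}k$; this is expected to follow from the symmetry of the minimal free resolution of $k$ under the $\Z/2$-involution of the $E_6$ Dynkin diagram that swaps these conjugate pairs.

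The hard part will be this last verification in the $E_6$ case. Character theory alone does not settle it, and one must use the explicit structure of the minimal free resolution of $k$; I anticipate reducing the check either to a direct computation with the matrix factorization of $k$ over the hypersurface presentation $R=\C[[x,y,z]]/(f)$, or to a more abstract symmetry argument using the outer automorphism of the $E_6$ McKay quiver.
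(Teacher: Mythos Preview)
Your reduction to Corollary~\ref{fouro} for the easy implications is fine, but the route you propose for $(4)\Rightarrow(1)$ has a genuine gap. The step ``almost Gorenstein $\Rightarrow$ Gorenstein or cyclic quotient'' (for $2$-dimensional complete normal $\C$-algebras of minimal multiplicity) is not a known classification, and there is no reason to expect it to hold: non-cyclic quotient singularities $\C[[x,y]]^G$ with $G\not\subset SL(2,\C)$ can satisfy $\ocms(R)=\ul(R)$ and hence be almost Gorenstein without being Gorenstein. So your argument collapses to the Gorenstein case by an unjustified leap, and even there you leave the $E_6$ verification unfinished.

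The paper's argument is entirely different and does not pass through almost Gorensteinness or any classification. The non-cyclic hypothesis is used exactly once, via \cite[Theorem 11.12]{Y}: it guarantees that the fundamental module $E$ (the middle term of the nonsplit extension $0\to\omega\to E\to\m\to0$) is \emph{indecomposable}. One then shows that $E$ is Ulrich (Lemma~\ref{4ff}(1)) and that any module in $\ocms(R)$ embeds in some $E^{\oplus n}$ with Cohen--Macaulay cokernel (Lemma~\ref{4ff}(2)). Applying this to $(\syz^2k)^\dag\in\ocms(R)$ gives a short exact sequence of Ulrich modules $0\to(\syz^2k)^\dag\xrightarrow{\alpha}E^{\oplus n}\to N\to0$; Lemma~\ref{4fff}(c) then forces $\alpha$ to split, so $(\syz^2k)^\dag\cong E^{\oplus m}$ by indecomposability of $E$. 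Since $E\cong E^\dag$ (Remark~\ref{fr}(4)), dualizing gives $\syz^2k\cong(E^\dag)^{\oplus m}\cong E^{\oplus m}\cong(\syz^2k)^\dag$. This works uniformly for all non-cyclic $R$, Gorenstein or not, and requires no case-by-case McKay analysis.
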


\begin{proof}
In view of Corollary \ref{fouro}, it suffices to show that if $R$ does not have a cyclic quotient singularity, then the fourth condition implies the first one.
By virtue of \cite[Theorem 11.12]{Y} the fundamental module $E$ is indecomposable.
Applying Lemma \ref{4ff}(2) to $(\syz^d k)^\dag$, we have an exact sequence
$
0 \to (\syz^d k)^\dag \xrightarrow{\alpha} E^{\oplus n} \to N \to 0
$
such that $N$ is Cohen--Macaulay.
Since $E$ is Ulrich by Lemma \ref{4ff}(1), so are all the three modules in this sequence by Lemma \ref{7.9}(2).
Thus we can apply Lemma \ref{4fff} to see that the induced map
$$
\Hom_R(\alpha,(\syz^dk)^\dag):\Hom_R(E^{\oplus n},(\syz^d k)^\dag)\to\Hom_R((\syz^d k)^\dag,(\syz^d k)^\dag) 
$$
is surjective.
This implies that $\alpha$ is a split monomorphism, and $(\syz^d k)^\dag$ is isomorphic to a direct summand of $E^{\oplus n}$.
Since $E$ is indecomposable, it folllows that $(\syz^d k)^\dag$ is isomorphic to $E^{\oplus m}$ for some $m$.
We obtain
$$
(\syz^d k)^\dag\cong E^{\oplus m} \cong (E^\dag)^{\oplus m}\cong (\syz^d k)^{\dag\dag}\cong \syz^d k,
$$
where the second isomorphism follows by Remark \ref{fr}(4).
\end{proof}

\begin{rem}\label{ny}
Let $R$ be a cyclic quotient surface singularity over $\C$.
Nakajima and Yoshida \cite[Theorem 4.5]{NY} give a necessary and sufficient condition for $R$ to be such that the number of nonisomorphic indecomposable Ulrich $R$-modules is equal to the number of nonisomorphic nonfree indecomposable special Cohen--Macaulay $R$-modules.
By \cite[Corollary 2.9]{IW}, the latter is equal to the number of isomorphism classes of indecomposable modules in $\ocms(R)$.
Therefore, they actually gives a necessary and sufficient condition for $R$ to satisfy $\ocms(R)=\ul(R)$.
\end{rem}

Using our Theorem \ref{four}, we give some examples of a quotient surface singularity over $\C$ to consider Ulrich modules over them.

\begin{ex}
(1) Let $S=\C[[x,y]]$ be a formal power series ring.
Let $G$ be the cyclic group $\frac{1}{3}(1,1)$, and let $R=S^G$ be the invariant (i.e. the third Veronese) subring of $S$.
Then $\ocms(R)=\ul(R)$.
This follows from \cite[Theorem 4.5]{NY} and Remark \ref{ny}, but we can also show it by direct caluculation: we have
$$
\Cl(R)=\{[R],[\omega],[\p]\}\cong\Z/3\Z,
$$
where $\omega=(x^3,x^2y)R$ is a canonical ideal of $R$, and $\p=(x^3,x^2y,xy^2)R$ is a prime ideal of height $1$ with $[\omega]=2[\p]$.
Since the second Betti number of $\C$ over $R$ is $9$, we see $\syz^2\C\cong\p^{\oplus3}$.
As $[\p^\dag]=[\omega]-[\p]=[\p]$, we have $\p^\dag\cong\p$ and $(\syz^2\C)^\dag\cong\syz^2\C$. 
Theorem \ref{four} shows $\ocms(R)=\ul(R)$.

(2) Let $S=\C[[x,y]]$ be a formal power series ring.
Let $G$ be the cyclic group $\frac{1}{8}(1,5)$, and let $R=S^G$ be the invariant subring of $S$.
With the notation of \cite{NY}, the Hirzebruch-Jung continued fraction of this group is $[2,3,2]$. It follows from \cite[Theorem 4.5]{NY} and Remark \ref{ny} that $\ocms(R)\not=\ul(R)$.
\end{ex}

\subsection{An exact structure of the category of Ulrich modules}

Finally, we consider realization of the additive category $\ul(R)$ as an exact category in the sense of Quillen \cite{Q}.
We begin with recalling the definition of an exact category given in \cite[Appendix A]{Ke}.

\begin{dfn}\label{ex}
Let $\mathcal{A}$ be an additive category.
A pair $(i,d)$ of composable morphisms
$$
X\xrightarrow[]{i} Y \xrightarrow[]{d} Z
$$
is {\it exact} if $i$ is the kernel of $d$ and $d$ is the cokernel of $i$.
Let $\mathcal{E}$ be a class of exact pairs closed under isomorphism.
The pair $(\mathcal{A},\mathcal{E})$ is called an {\it exact category} if the following axioms hold.
Here, for each $(i,d)\in\mathcal{E}$ the morphisms $i$ and $d$ are called an {\it inflation} and a {\it deflation}, respectively.
\begin{itemize}
\item[(Ex0)]
$1:0\to 0$ is a deflation.
\item[(Ex1)]
The composition of deflations is a deflation.
\item[(Ex2)]
For each morphism $f:Z' \to Z$ and each deflation $d:Y \to Z$, there is a pullback diagram as in the left below, where $d'$ is a deflation.
\item[(Ex2$^{\rm op}$)]
For each morphism $f:X\to X'$ and each inflation $i:X\to Y$, there is a pushout diagram as in the right below, where $i'$ is an inflation.
$$
\xymatrix@R-1pc{
Y' \ar[d] \ar[r]^{d'} & Z' \ar[d]^f \\
Y \ar[r]^d & Z
}
\qquad\qquad\qquad
\xymatrix@R-1pc{
X \ar[r]^i \ar[d]_f & Y \ar[d]\\
X' \ar[r]^{i'} & Y'
}
$$
\end{itemize}
\end{dfn}

We can equip a structure of an exact category with our $\ul(R)$ as follows.

\begin{thm}
Let $R$ be a $d$-dimensional Cohen--Macaulay local ring with residue field $k$ and canonical module, and assume that $R$ has minimal multiplicity.
Let $\s$ be the class of exact sequences $0 \to L \to M \to N \to 0$ of $R$-modules with $L,M,N$ Ulrich.
Then $\ul(R)=(\ul(R),\s)$ is an exact category having enough projective objects and enough injective objects with $\proj\ul(R)=\add(\syz^dk)$ and $\inj\ul(R)=\add((\syz^dk)^\dag)$.
\end{thm}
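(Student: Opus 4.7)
The plan is to verify the Keller exact-category axioms of Definition \ref{ex} for $(\ul(R), \s)$, then identify the projective and injective objects using the canonical dual together with Lemma \ref{8}(4). Throughout, the workhorse is Lemma \ref{7.9}(2)(a): in a CM short exact sequence with Ulrich middle term, both outer terms are Ulrich.

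For the axioms, (Ex0) is immediate. For (Ex1), the composite of deflations $Y' \twoheadrightarrow Y \twoheadrightarrow Z$ has kernel $K$ sitting in $0 \to K \to Y' \to Z \to 0$ with $Y'$ Ulrich, so $K$ is Ulrich. For (Ex2), the pullback $Y\times_Z Z'$ of a deflation $d:Y\to Z$ along a morphism $Z' \to Z$ fits in $0 \to Y\times_Z Z' \to Y \oplus Z' \to Z \to 0$ whose middle and right terms are Ulrich, so $Y\times_Z Z'$ is Ulrich and $0\to \ker d\to Y\times_Z Z'\to Z' \to 0$ lies in $\s$. Axiom (Ex2$^{\mathrm{op}}$) is handled by the dual pushout argument.

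The heart of the proof is to show that $(\syz^d k)^\dag$ is injective in $(\ul(R),\s)$. I would first build a Cohen--Macaulay approximation of $k$ with CM part $(\syz^d k)^\dag$ by dualizing the truncated minimal free resolution $0 \to \syz^d k \to R^{b_{d-1}} \to \cdots \to R^{b_0} \to k \to 0$: using $\Hom_R(k,\omega)=0$, $\Ext_R^i(k,\omega)=0$ for $0<i<d$, and $\Ext_R^d(k,\omega)\cong k$ by local duality, one obtains an exact sequence
\[
0 \to N \to (\syz^d k)^\dag \to k \to 0
\]
in which $N$ admits a finite resolution by direct sums of $\omega$, and so has finite injective dimension. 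Consequently $\Ext_R^1(M,N)=0$ for every $M\in\ul(R)$. For any sequence $0\to X\to Y\to Z\to 0$ in $\s$, applying $\Hom_R(-,N)$, $\Hom_R(-,(\syz^d k)^\dag)$, and $\Hom_R(-,k)$ assembles a commutative diagram of short exact rows whose outer verticals are surjective -- the left by $\Ext_R^1(Z,N)=0$, the right by Lemma \ref{4fff}(b), whose proof rests only on Lemma \ref{7.9}(1) and is dimension-independent -- and the snake lemma yields the surjectivity of $\Hom_R(Y,(\syz^dk)^\dag) \to \Hom_R(X,(\syz^dk)^\dag)$, which is the injectivity.

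Projectivity of $\syz^d k$ then falls out of duality: $(-)^\dag$ is an exact anti-equivalence of $\ul(R)$ with $(-)^{\dag\dag}\cong\id$ (by Corollary \ref{5.6} and the vanishing $\Ext_R^{\geq 1}(Z,\omega)=0$ for $Z\in\cm(R)$), and via the adjunction isomorphism $\Hom_R(\syz^d k,Y)\cong \Hom_R(Y^\dag,(\syz^d k)^\dag)$ for $Y\in\cm(R)$, injectivity of $(\syz^d k)^\dag$ translates into projectivity of $\syz^d k$. Enough projectives comes from Lemma \ref{8}(4), which produces a surjection $(\syz^d k)^{\oplus n}\twoheadrightarrow M$ whose kernel is Ulrich by Lemma \ref{7.9}(2)(a) and so a deflation in $\s$; enough injectives is obtained by applying Lemma \ref{8}(4) to $M^\dag$ and dualizing. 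The characterizations $\proj\ul(R)=\add\syz^d k$ and $\inj\ul(R)=\add(\syz^d k)^\dag$ follow from closure of projectives/injectives under direct sums and summands, combined with splitting of the enough-projective/injective deflations just produced. The main obstacle will be the injectivity step: assembling the CM approximation $N \to (\syz^d k)^\dag \to k$ and running the snake-lemma chase in arbitrary dimension, effectively generalizing Lemma \ref{4fff}(c) beyond $d=2$.
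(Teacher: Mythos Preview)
Your proof is correct and follows essentially the same route as the paper's: verify the Keller axioms via Lemma~\ref{7.9}(2)(a), establish injectivity of $(\syz^dk)^\dag$ through the Cohen--Macaulay approximation $0\to N\to(\syz^dk)^\dag\to k\to0$ and a snake-lemma chase, transport this to projectivity of $\syz^dk$ via the duality $(-)^\dag$, and use Lemma~\ref{8}(4) for enough projectives (dualized for enough injectives). One small remark: you describe the injectivity step as ``generalizing Lemma~\ref{4fff}(c) beyond $d=2$,'' but in fact Lemma~\ref{4fff} is stated and proved in arbitrary dimension (its placement in the dimension-two subsection is only because that is where it is first applied), so the paper simply cites Lemma~\ref{4fff}(c) directly; what you have written is precisely its proof.
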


\begin{proof}
We verify the axioms in Definition \ref{ex}.

(Ex0):
This is clear.

(Ex1):
Let $d:Y\to Z$ and $d':Z \to W$ be deflations.
Then there is an exact sequence
$
0 \to X \to Y \xrightarrow{d'd} W \to 0
$
of $R$-modules.
Since $Y$ is in $\ul(R)$ and $X,W\in \cm(R)$, it follows from  that $X\in\ul(R)$.
Thus this sequence belongs to $\s$, and $d'd$ is a deflation.

(Ex2):
Let $f:Z' \to Z$ be a homomorphism in $\ul(R)$ and $d:Y \to Z$ a deflation in $\s$.
Then we get an exact sequence
$
0\to Y' \to Y\oplus Z' \xrightarrow{(d,f)} Z \to 0.
$
Since $Y\oplus Z'\in \ul(R)$ and $Y',Z\in\cm(R)$, Lemma \ref{7.9}(2) implies $Y'\in \ul(R)$.
Make an exact sequence $0 \to X' \to Y' \xrightarrow{d'} Z' \to 0$.
As $Y'\in \ul(R)$ and $X',Z'\in\cm(R)$, the module $Z'$ is in $\ul(R)$ by Lemma \ref{7.9}(2) again.
Thus $d'$ is a deflation.

(Ex2$^{\rm op}$):
We can check this axiom by the opposite argument to (Ex2). 

Now we conclude that $(\ul(R),\s)$ is an exact category.
Let us prove the remaining assertions.
Lemma \ref{4fff}(c) yields the injectivity of $(\syz^d k)^\dag$.
Since $(-)^\dag$ gives an exact duality of $(\ul(R),\s)$, the module $\syz^d k$ is a projective object.
We also observe from Lemma \ref{8} and Corollary \ref{5.6} that $(\ul(R),\s)$ has enough projective objects with $\proj\ul(R)=\add(\syz^dk)$, and has enough injective objects with $\inj\ul(R)=\add((\syz^dk)^\dag)$ by the duality $(-)^\dag$.
\end{proof}

\begin{rem}
Let $(R,\m)$ be $1$-dimensional Cohen--Macaulay local ring with infinite residue field.
Let $(t)$ be a minimal reduction of $\m$.
Then
$
\ul(R)=\cm\left(R\left[\frac{\m}{t}\right]\right)
$
by \cite[Proposition A.1]{HUB}.
This equality acturely gives an equivalence $\ul(R)\cong\cm(R[\frac{\m}{t}])$ of categories, since Hom-sets do not change; see \cite[Proposition 4.14]{LW}.
Thus the usual exact structure on $\cm(R[\frac{\m}{t}])$ coincides with the exact structure on $\ul(R)$ given above via this equivalence.
\end{rem}

\section*{Acknowledgments}

The authors are grateful to Doan Trung Cuong for valuable discussion on Ulrich modules.
In particular, his Question \ref{1} has given a strong motivation for them to have this paper.


\end{document}